\documentclass[11pt,a4paper,twoside,reqno]{amsart}
\pdfoutput=1
\title{Coclosed $G_2$-structures on $\text{SU}(2)^2$-invariant cohomogeneity one manifolds}
\author{Izar Alonso}
\address[I. Alonso]{Department of Mathematics \\ Rutgers University \\Piscataway, NJ 08854\\ USA}
\email{izar.alonso@rutgers.edu}
\subjclass[2010]{53C10}
\keywords{$G_2$-structures, Cohomogeneity one actions, Differential Geometry}
\thanks{The author was supported by the EPSRC under grant No.\ 2271784 and the National Science Foundation under Grant No.\ DMS-1928930}

\usepackage[dutch, english]{babel}
\usepackage{fancyhdr,amsmath,amssymb,amsbsy,amsfonts,latexsym,float,color,amsthm,mathabx,hyperref}
\usepackage{graphicx,cite,enumerate,tikz-cd,cancel,anysize,tensor,wasysym,comment,mathtools,enumitem}

\numberwithin{equation}{section} \setcounter{tocdepth}{1}

\setcounter{tocdepth}{1}
\setlength{\oddsidemargin}{25pt} \setlength{\evensidemargin}{25pt}
\setlength{\textwidth}{400pt} \setlength{\textheight}{650pt}
\setlength{\topmargin}{0pt}

\usepackage{scalerel,stackengine}
\stackMath
\newcommand{\R}{\mathbb{R}}
\newcommand{\C}{\mathbb{C}}

\newcommand{\iprod}{\mathbin{\raisebox{\depth}{\scalebox{1}[-2]{$\lnot$}}}}

\newtheorem{theorem}{Theorem}[section]
\theoremstyle{definition}
\newtheorem{defi}[theorem]{Definition}
\newtheorem{lemma}[theorem]{Lemma}
\newtheorem{ex}[theorem]{Example}

\newtheorem{prop}[theorem]{Proposition}
\newtheorem{cor}[theorem]{Corollary}
\newtheorem{remark}[theorem]{Remark}

\numberwithin{equation}{section}

\begin{document}
\begin{abstract}
We consider two different $\text{SU}(2)^2$-invariant cohomogeneity one manifolds, one non-compact $M=\mathbb{R}^4 \times S^3$ and one compact $M=S^4 \times S^3$, and study the existence of coclosed $\text{SU}(2)^2$-invariant $G_2$-structures constructed from half-flat $\text{SU}(3)$-structures.
For $\R^4 \times S^3$, we prove the existence of a family of coclosed (but not necessarily torsion-free) $G_2$-structures which is given by three smooth functions satisfying certain boundary conditions around the singular orbit and a non-zero parameter.
Moreover, any coclosed $G_2$-structure constructed from a half-flat $\text{SU}(3)$-structure is in this family.
For $S^4 \times S^3$, we prove that there are no $\text{SU}(2)^2$-invariant coclosed $G_2$-structures constructed from half-flat $\text{SU}(3)$-structures.
\end{abstract}

\maketitle

\section{Introduction}

A $G_2$-structure in a smooth seven-manifold $M$ is a smooth positive 3-form $\varphi$, and so defines a unique Riemannian metric $g_\varphi$ and an orientation $\mu_\varphi$ on $M$. 
A $G_2$-structure is called 
\textit{closed} if $d\varphi = 0$, and 
\textit{coclosed} if $d {*}_\varphi \varphi = 0$, where ${*}_\varphi$ is the Hodge star operator associated to $(g_\varphi, \mu_\varphi)$.
Seven-dimensional manifolds with a $G_2$-structure have been of great interest in differential geometry, since if a $G_2$-structure is both closed and coclosed, the metric $g_\varphi$ has holonomy contained in $G_2$. 
They have also attracted substantial interest in String Theory and M-theory. In heterotic or type II string theory they give rise to three-dimensional vacua, while in M-theory to four-dimensional $N = 1$ Minkowski vacua.
Coclosed $G_2$-structures exist on any oriented spin seven-manifold \cite{CrowNord15}. They appear in the context of the heterotic $G_2$ system, which is studied both by mathematicians and physicists.
When considering compactifications of heterotic string theory down to 3 dimensions,
the \textit{heterotic $G_2$ system} is a system for both geometric fields and gauge fields over a manifold with a $G_2$-structure. 
The two gauge fields are $G_2$-instantons and have their curvatures related by the so-called \textit{heterotic Bianchi identity}.
The heterotic $G_2$ system also requires that, after a conformal transformation, the $G_2$-structure is coclosed.
It appears in the broader context of the heterotic systems on manifolds with special geometry, such as the Hull--Strominger system \cite{[30]Strominger, Hull}.
The heterotic $G_2$ system was first studied in \cite{GN95, Gauntlett01, FI02, FI03, Gauntlett04a, Gauntlett04b, Ivanov05, Lukas11, GLL12}.
For a full description of the system, we refer the reader for example to \cite{dlOLM21}.

The main aim of this paper is to find coclosed $G_2$-structures in the cohomogeneity one setting.
A \textit{cohomogeneity one manifold} is a Riemannian manifold with an action by isometries of a compact Lie group $G$ having a generic orbit of codimension one.
We are interested in cohomogeneity one manifolds as this is the next step of complexity after Reidegeld studied homogeneous coclosed $G_2$-structures in \cite{Reidegeld09}.
In \cite{Cleyton01}, Cleyton and Swann studied coclosed $G_2$-structures on cohomogeneity one manifolds where the acting Lie group $G$ was simple. 
For that reason, we are interested in the situation when $G$ is not simple.
Assuming that the $G$-action is almost effective and that the manifold is simply connected leads us to consider $G=\text{SU}(2)^2$, up to finite quotients.
As torsion-free $G_2$ manifolds have been studied in this case (\cite{BB13} and see below for references on torsion-free $G_2$-structures on $\R^4 \times S^3$),
we will focus on the case when the $G_2$-structure is coclosed but not necessarily torsion-free.

In this paper, we consider two different $\text{SU}(2)^2$-invariant cohomogeneity one manifolds, one non-compact $M=\R^4 \times S^3$ and one compact $M=S^4 \times S^3$, and look for coclosed $\text{SU}(2)^2$-invariant $G_2$-structures constructed from half-flat $\text{SU}(3)$-structures, i.e.\ structures are the induced structures on hypersurfaces of a $G_2$-manifold.
By considering this type of $G_2$-structure, we are able to write them in a simplified way \cite{Schulte10, MS13}. This will allow us to write conditions for the structure to be coclosed as a system of linear ordinary differential equations.
We first find a large family of structures with these conditions on $\R^4 \times S^3$. The following Theorem summarizes the existence result (see Section \ref{sec:coclosedG2} for a more detailed statement).

\begin{theorem}\label{thm:C}
On the cohomogeneity one manifold $M=\R^4 \times S^3$ with group diagram $\text{SU}(2)^2 \supset \Delta \text{SU}(2) \supset \{ 1 \}$, there is a family of $\text{SU}(2)$-invariant coclosed $G_2$-structures which is given by three positive smooth functions satisfying certain boundary conditions around the singular orbit, and a non-zero parameter.
Moreover, any $\text{SU}(2)$-invariant coclosed $G_2$-structure constructed from a half flat $\text{SU}(3)$-structure is in this family.
\end{theorem}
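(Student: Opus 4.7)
The plan is to convert the coclosed equation on $M$ into a system of ODEs on the orbit interval, and then to analyse this system together with the smoothness conditions at the collapsing orbit.

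On the regular part $M_0 \cong (0,\infty) \times \text{SU}(2)^2$, with $t$ the distance to the singular orbit, every $\text{SU}(2)^2$-invariant $G_2$-structure built from a half-flat $\text{SU}(3)$-structure takes the form
\[
 \varphi \;=\; dt \wedge \omega(t) + \rho(t), \qquad
 {*}_\varphi \varphi \;=\; \tfrac{1}{2}\omega(t)^2 - dt \wedge \hat\rho(t),
\]
where $(\omega(t),\rho(t),\hat\rho(t))$ is a one-parameter family of $\text{SU}(2)^2$-invariant half-flat $\text{SU}(3)$-structures on the principal orbit $\text{SU}(2)^2$. Using the normal form for such structures from \cite{Schulte10, MS13}, expressed in a fixed left-invariant coframe dual to an $\mathrm{Ad}(\Delta\text{SU}(2))$-invariant splitting $\mathfrak{su}(2)^2 = \Delta\mathfrak{su}(2) \oplus \mathfrak{m}$, I first reduce the family to a finite list of real functions of $t$.

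The coclosed condition $d{*}_\varphi\varphi = 0$ then splits, via $d = dt \wedge \partial_t + d^{\mathrm{orb}}$ together with the half-flat conditions $d^{\mathrm{orb}}\rho = 0$ and $d^{\mathrm{orb}}(\omega^2) = 0$ holding on every orbit, into a single $6$-form identity of the schematic form $\omega \wedge \partial_t \omega = \pm\, d^{\mathrm{orb}}\hat\rho$. Expanding both sides in the basis of invariant $4$-forms on $\text{SU}(2)^2$ turns this into a linear first-order ODE system in the parametrising functions. Some components of the system are purely algebraic and let me eliminate variables; the remaining equations can be integrated explicitly. I expect exactly three of the parametrising functions to survive as free data, subject to a positivity condition ensuring that $\varphi$ is a positive $3$-form, together with one constant of integration that plays the role of the non-zero parameter in the statement.

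The main obstacle is the extension across the singular orbit at $t=0$, where the isotropy $\Delta\text{SU}(2)$ acts on the normal $\R^4$ by the standard representation. Smoothness of $\varphi$ on $M = \R^4 \times S^3$ forces a definite pattern of vanishing, parity in $t$, and matching of limits at $t=0$ among the parametrising functions, in the spirit of the smooth extension analyses for cohomogeneity one $G_2$-metrics in \cite{BB13, Lotay17}. The technical heart of the argument is then to verify that these smoothness constraints are compatible with the ODE system, to pin down the precise boundary conditions on the three free functions and on the parameter, and to check that every $\text{SU}(2)^2$-invariant coclosed $G_2$-structure constructed from a half-flat $\text{SU}(3)$-structure arises in this way, which yields the uniqueness part of the theorem.
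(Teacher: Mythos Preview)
Your overall strategy matches the paper's: parametrize the invariant half-flat $\text{SU}(3)$-structures by functions of $t$, reduce the coclosed condition to a first-order ODE system, and then verify compatibility with the smooth extension conditions at the singular orbit. However, two concrete points in your sketch do not survive contact with the actual system.

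First, the claim that the remaining equations ``can be integrated explicitly'' is not what happens. In the paper's parametrization by six functions $A_i,B_i$, the coclosed condition gives three genuine ODEs (no algebraic components to eliminate). The paper's key move is the substitution $D_1=A_2B_2A_3B_3$ (cyclically), which makes the system \emph{linear} in $D=(D_1,D_2,D_3)$ with coefficients depending on the free data $A_1,A_2,A_3$, but with a $1/t$ singularity at $t=0$. Except in the symmetric case $A_1=A_2=A_3$, this is not solved in closed form; instead one invokes a Malgrange-type singular initial value theorem (Theorem~\ref{thm:4.7}, from \cite{FH17}). A rescaling $E=D/t^2$ is needed first, because the residue matrix has eigenvalue $2$ and the theorem requires all eigenvalues to avoid the positive integers. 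This singular IVP step is the technical heart of the existence and uniqueness argument, and your proposal does not anticipate it.

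Second, the extension to the singular orbit requires more than a generic parity/vanishing check. One must show that the $D_i$ remain strictly positive for $t>0$ so that the $B_i$ can be recovered at all (Lemma~\ref{lemma:signDs}), and one must verify the second-order matching $\ddot B_1(0)=\ddot B_2(0)=\ddot B_3(0)$ demanded by Lemma~\ref{lemma8}; the latter is obtained by a Taylor-expansion computation (Lemma~\ref{lemma:dotdotB0}) and is not automatic from the first-order data. Your identification of three free functions plus one nonzero constant of integration is correct and coincides with the paper's $(A_1,A_2,A_3;b_0)$.
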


On $S^4 \times S^3$, we prove that there are no $\text{SU}(2)^2$-invariant coclosed $G_2$-structures constructed from half-flat $\text{SU}(3)$-structures.

On the manifold $\R^4 \times S^3$, there are two explicit complete $G_2$-holonomy metrics: the Bryant--Salamon (BS) metric \cite{BS89} and the Brandhuber et al.\ (BGGG) metric \cite{Brandhuber01}. The first one is one of the three asymptotically conical examples constructed by Bryant and Salamon in 1989, while the second one is a member of a family of complete $\left( \text{SU}(2)^2 \times \text{U}(1) \right)$-invariant $G_2$-holonomy metrics found by Bogoyavlenskaya in \cite{Bogoyavlenskaya13}, known as the $\mathbb{B}_7$ family.
More recently, Foscolo, Haskins and Nordstr\"om constructed infinitely many new 1-parameter families of simply connected complete noncompact $G_2$–manifolds \cite{FHN21}, which are also $\left( \text{SU}(2)^2 \times \text{U}(1) \right)$-invariant. 
In \cite{Podesta19}, Podestà proved the existence of a one-parameter family of locally defined nearly parallel $G_2$-structures, which are mutually non-isomorphic and invariant under the cohomogeneity one action of the group $\text{SU}(2)^3$.

The family of $G_2$-structures from Theorem \ref{thm:C} includes a $\text{SU}(2)^3$-invariant subfamily of explicit structures.
It also includes some known families of torsion-free $G_2$-structures as particular cases: the Bryant--Salamon $G_2$-holonomy metric \cite{BS89} and the 1-parameter family of complete $\left( \text{SU}(2)^2 \times \text{U}(1) \right)$-invariant $G_2$-holonomy metrics of Brandhuber et al.\ \cite{Brandhuber01} and Bogoyavlenskaya \cite{Bogoyavlenskaya13}. 

\subsection*{Layout of the paper}
In Section \ref{sec:prelims}, we will start by giving a review of the geometry of $G_2$-structures, the associated splitting of differential forms into irreducible $G_2$ representations and the torsion classes of the structure in \ref{sec:G2}.
We also explain how to construct a $G_2$-structure from a half-flat $\text{SU}(3)$-structure in \ref{subsec:SU(3)}.
In Section \ref{sec:coh1}, we will develop the mathematical background on cohomogeneity one manifolds with two, one or zero ends.

We will start Section \ref{sec:SU(2)^2} by describing which seven-dimensional simply connected manifolds can be of cohomogeneity one for the almost effective action of a compact Lie group.
Then, in Section \ref{sec:coh1g2}, we give a description of a $G_2$-structure on a manifold which is invariant under the cohomogeneity one action of $\text{SU}(2)^2$. 
In Section \ref{sec:extension} we discuss the conditions for the metric to be extended smoothly to a singular orbit for the two manifolds of our interest. 
In Section \ref{sec:coclosedeqns} we obtain the systems of equations for the $G_2$-structure to be coclosed and write them as a system of linear ordinary differential equations. Then we consider the special case of $\text{SU}(2)^3$-invariant coclosed $G_2$-structures and give some examples in Section \ref{sec:examples}.

In Section \ref{sec:coclosedG2} we find a class of coclosed $G_2$-structures on a given seven-dimensional simply connected manifold under the cohomogeneity one action of $\text{SU}(2)^2$.
In Section \ref{sec:existence} we prove some existence results for the coclosed equations. We present and prove our results for the non-compact manifold $\R^4 \times S^3$ in Section \ref{sec:extnoncomp} and for the compact manifold $S^4 \times S^3$ in \ref{sec:extcomp}.
Finally, in Section \ref{sec:conclusions}, we make a few concluding remarks.

\subsection*{Acknowledgements}
I would like to thank my supervisors Andrew Dancer and Jason Lotay for their constant support, advice, and helpful exchange of ideas.
I also want to thank Christoph B\"ohm, Johannes Nordstr\"om and the anonymous reviewer for their helpful comments.
This material is based upon work carried out in the course of my DPhil studies at the University of Oxford and supported by the EPSRC under grant No.\ 2271784 and the National Science Foundation under Grant No.\ DMS-1928930 while the author was in residence at the Simons Laufer Mathematical Sciences Institute (previously known as MSRI) Berkeley, California, during the Fall 2022 semester, and accepted while the author in residence at the Simons Laufer Mathematical Sciences Institute during the Fall 2024 semester.

\section{Preliminaries}\label{sec:prelims}

\subsection{The geometry of $G_2$-structures}\label{sec:G2}

In this section, we will give a summary of the geometry of $G_2$-structures, from a Riemannian geometric point of view, including a discussion of the torsion. 
Through the past years, $G_2$-structures have been have been extensively studied. For more details, we refer the reader to \cite[Section 4]{Karigiannis19} and \cite[Chapter 11]{joyce}.

Consider $\R^7$ with the standard euclidean metric $g_0$, for which the standard basis $e_1, ..., e_7$ is orthonormal. Let $\mu_0 = e^1 \wedge ... \wedge e^7$ be the standard volume form associated to $g_0$ and the standard orientation.
Define the ``associative" 3-form $\varphi_0$ by
\begin{equation}\label{eq:varphi}
    \varphi_0= e^{123} -e^{167} -e^{527} -e^{563} -e^{415} -e^{426} -e^{437},
\end{equation}
where $e^1, ..., e^7$ is the standard dual basis of $(\R^7)^*$ and we write $e^{ijk}= e^i \wedge e^j \wedge e^k$, giving a different expression for the associative form.
Note that the order of $e^1, ..., e^7$ might change between references.

\begin{defi}
Let $M$ be a smooth seven-manifold. A \textit{$G_2$-structure} on $M$ is a smooth 3-form $\varphi$ on $M$ such that, at every $p \in M$, there exists a linear isomorphism $T_p M \cong \R^7$ with respect to which $\varphi_p \in \Lambda^3(T_p^* M)$ corresponds to $\varphi_0 \in \Lambda^3(\R^7)^*$. 
\end{defi}

A $G_2$-structure $\varphi$ on $M$ induces a Riemannian metric $g_\varphi$ and associated Riemannian volume form $\mu_\varphi$. Let $*_\varphi$ be the Hodge star operator induced from $(g_\varphi, \mu_\varphi)$. Then the \textit{coassociative form} is given by
$$
\psi= *_\varphi \varphi.
$$
A smooth seven-manifold admits a $G_2$-structure if and only if it is both orientable and spin \cite{Gray69}.

Let $\Omega^k= \Gamma(\Lambda^k(T^*M))$ be the space of smooth k-forms on $M$, and let $\Omega_l^k$ be the irreducible representation of $G_2$ of (pointwise) dimension $l$.
We can write a decomposition of $\Omega^2$ and $\Omega^3$ into irreducible $G_2$ representations:
$$
\Omega^2 = \Omega_7^2 \oplus \Omega_{14}^2,
$$
$$
\Omega^3= \Omega_1^3 \oplus \Omega_7^3 \oplus \Omega_{27}^3,
$$
where
\begin{equation}
\begin{array}{ll}
    \Omega_7^2&= \{ \beta \in \Omega^2 \ | \ {*} (\varphi \wedge \beta)=-2 \beta \}, \\
    \Omega_{14}^2&= \{ \beta \in  \Omega^2 \ | \ {*} (\varphi \wedge \beta)= \beta \}= \{ \beta \in \Omega^2 \ | \ \beta \wedge \psi=0 \} \cong \mathfrak{g}_2 , \\
    \Omega_1^3&= \{ f \varphi \ | \ f \in \Omega^0 \}, \\
    \Omega_7^3&= \{ X \iprod \psi \ | \ X \in \Gamma(TM) \}, \\
    \Omega_{27}^3&= \{ \gamma \in \Omega^3 \ | \ \gamma \wedge \varphi=0, \gamma \wedge \psi=0 \}.
\end{array}
\end{equation}

Let $M$ be a manifold with $G_2$-structure $\varphi$ and let $\nabla$ be the Levi-Civita covariant derivative associated to the Riemannian metric $g_\varphi$.

\begin{defi}
We can write 
$$
\nabla_X \varphi =T(X) \iprod \psi,
$$
for some vector field $T(X)$ on $M$. We call $T \in \Gamma(T^*M \otimes T^*M)$ the \textit{full torsion tensor} of $\varphi$.
\end{defi}

The following Proposition (\cite[Proposition 1]{Bryant05}, consequence of a classical result of Fern\'andez and Gray \cite{FG82}) gives a full description of the torsion of $\varphi$ is in terms of four quantities, known as \textit{intrinsic torsion forms}.

\begin{prop}\label{prop:torsionforms}
For any $G_2$-structure $\varphi$, there exist unique differential forms $\tau_0 \in \Omega^0(M)$, $\tau_1 \in \Omega^1$, $\tau_2 \in \Omega^2_{14}$ and $\tau_3 \in \Omega^3_{27}$ so that the following equations hold:
\begin{equation}\label{eq:dvarphi}
    d \varphi = \tau_0 \psi + 3 \tau_1 \wedge \varphi + * \tau_3,
\end{equation}
\begin{equation}\label{eq:dpsi}
    d \psi = 4 \tau_1 \wedge \psi + \tau_2 \wedge \varphi.
\end{equation}
\end{prop}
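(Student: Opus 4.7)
The plan is to decompose $d\varphi \in \Omega^4$ and $d\psi \in \Omega^5$ into their $G_2$-irreducible components and identify each piece with one of the claimed torsion forms. First I would use Hodge duality applied to the decompositions of $\Omega^3$ and $\Omega^2$ already stated to obtain
$$\Omega^4 = \Omega^4_1 \oplus \Omega^4_7 \oplus \Omega^4_{27}, \qquad \Omega^5 = \Omega^5_7 \oplus \Omega^5_{14},$$
noting in particular the absence of a trivial summand in $\Omega^5$. Explicit $G_2$-equivariant maps realising these pieces are $f \mapsto f\psi$ from $\Omega^0$ onto $\Omega^4_1$; $\alpha \mapsto \alpha \wedge \varphi$ from $\Omega^1$ onto $\Omega^4_7$; the Hodge star from $\Omega^3_{27}$ onto $\Omega^4_{27}$; $\alpha \mapsto \alpha \wedge \psi$ from $\Omega^1$ onto $\Omega^5_7$; and $\beta \mapsto \beta \wedge \varphi$ from $\Omega^2_{14}$ onto $\Omega^5_{14}$. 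Each is pointwise nonzero and equivariant, so by Schur's lemma is an isomorphism of $G_2$-irreducibles.

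Applying these decompositions, there exist unique $\tau_0 \in \Omega^0$, $\tau_3 \in \Omega^3_{27}$, $\tau_2 \in \Omega^2_{14}$ and 1-forms $\alpha, \tilde\alpha \in \Omega^1$ such that
$$d\varphi = \tau_0 \psi + 3\alpha \wedge \varphi + *\tau_3, \qquad d\psi = 4\tilde\alpha \wedge \psi + \tau_2 \wedge \varphi,$$
where the scalars $3$ and $4$ are inserted as normalisations whose justification is the last step. Uniqueness of the individual forms is immediate because the summands are isotypic of distinct $G_2$-types.

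The substantive content is then the identification $\alpha = \tilde\alpha$. For this I would express both 1-forms in terms of the full torsion tensor $T$ given by $\nabla_X \varphi = T(X) \iprod \psi$. Since $\psi = *\varphi$ and the Levi-Civita connection commutes with $*$, differentiating produces a companion formula writing $\nabla_X \psi$ in terms of $T(X)$ and $\varphi$. Contracting $\nabla \varphi$ and $\nabla \psi$ with an orthonormal coframe yields $d\varphi$ and $d\psi$ as first-order expressions in the components of $T$. Decomposing $T$ under the $G_2$-action on $T^*M \otimes T^*M = T_1 \oplus T_7 \oplus T_{14} \oplus T_{27}$, a type-matching argument shows that only the piece $T_7$ can contribute to the $\Omega_7$-parts of both $d\varphi$ and $d\psi$; the same underlying 1-form thus determines both $\alpha$ and $\tilde\alpha$. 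Setting $\tau_1 := \alpha = \tilde\alpha$ will then finish the proof.

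The main obstacle I anticipate is the numerical bookkeeping in this final step: one needs identities relating contractions like $e_i \wedge (e^i \iprod \psi)$ and $e_i \wedge (e^i \wedge \varphi)$ to $\varphi$ and $\psi$ respectively, plus the action of $*$ on $\Omega^2_7$ and $\Omega^3_{27}$, in order to pin down the precise coefficients $3$ and $4$ that make the single 1-form $\tau_1$ serve both equations. Once these representation-theoretic identities are fixed, the proposition follows routinely.
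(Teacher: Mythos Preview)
The paper does not supply its own proof of this proposition; it is stated with attribution to Bryant and to Fern\'andez--Gray, and no argument is given in the text. Your outline is essentially the standard proof one finds in those references: decompose $\Omega^4$ and $\Omega^5$ into $G_2$-irreducibles via Hodge duality, realise each summand by the equivariant maps you list, and then use the decomposition of the intrinsic torsion $T \in \Gamma(T^*M \otimes T^*M)$ to see that a single $7$-dimensional component controls the $\Omega_7$-parts of both $d\varphi$ and $d\psi$, fixing the normalisations $3$ and $4$. There is nothing in the paper itself to compare your approach against.
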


\begin{defi}
The $G_2$-structure $\varphi$ is called \textit{torsion-free} if $\nabla \varphi =0$.
We say $(M, \varphi)$ is a \textit{$G_2$-manifold} if $\varphi$ is a torsion-free $G_2$-structure on $M$.
\end{defi}
Hence, $\varphi$ is torsion-free if and only if $T=0$. Vanishing of the torsion is equivalent to the intrinsic torsion forms being all zero, so from (\ref{eq:dvarphi}) and (\ref{eq:dpsi}) we also have that $\varphi$ is torsion-free if and only if both $d \varphi=0$ and $d \psi=0$. 
When $\varphi$ is torsion-free, $g_\varphi$ is Ricci-flat and has holonomy contained in $G_2$.
Moreover, if $(M, \varphi, g_\varphi)$ is compact and $\varphi$ is torsion-free, then the holonomy $\text{Hol}(g_\varphi)= G_2$ if and only if $\pi_1(M)$ is finite (see \cite[Chapter 11]{joyce}).

We can classify $G_2$-structures in different types, depending on which components of the torsion vanish:
\begin{enumerate}[label=(\roman*)]
    \item Torsion-free ($\nabla \varphi=0$): $\tau_0 =\tau_1 =\tau_2 = \tau_3 =0$; 
    \item Closed ($d \varphi=0$): $\tau_0 =\tau_1 = \tau_3 =0$; 
    \item Coclosed ($d \psi=0$): $\tau_1 =\tau_2 =0$; 
    \item Nearly parallel ($d \varphi =\lambda \psi$, $\lambda \neq 0$): $\tau_1 =\tau_2 = \tau_3 =0$.
\end{enumerate}
In this paper, we will be interested in coclosed $G_2$-structures. 
The principal reason for this is that for a manifold to admit a solution to the heterotic $G_2$ system, it needs to admit a $G_2$-structure with intrinsic form $\tau_2=0$. After a conformal transformation, we can assume that $\tau_1=0$.
As proved by Crowley and Nordström in \cite{CrowNord15}, coclosed $G_2$-structures exist on any oriented spin seven-manifold. Coclosed structures have also been in the past
referred to as cosymplectic.

\subsection{Half-flat $\text{SU}(3)$-structures and $G_2$-structures}\label{subsec:SU(3)}

An \textit{$\text{SU}(3)$-structure} $(g, J, \Omega)$ on a six-dimensional smooth manifold $M$ is a Riemannian metric $g$ and a $g$-orthogonal almost complex structure $J$ together with a $(3,0)$-form of non-zero constant norm $\Omega=\Omega_1+ i\Omega_2$ satisfying the \textit{normalization condition}:
\begin{equation}
    \Omega_1 \wedge \Omega_2=\frac{2}{3}\omega^3.  
\end{equation}
We say that $\alpha \in \Omega^k(M)$ is \textit{stable} if for every $p \in M$, the orbit of $\alpha_p$ under the action of $\text{GL}(T_p M)$ is open in $\Omega^k (T_pM^*)$.
An $\text{SU}(3)$-structure on a six-dimensional smooth manifold can be described in terms of a pair of a 2-form $\omega$ and a 3-form $\Omega_2$ that are both stable (with $\lambda(\Omega_2) <0$) and such that they satisfy the \textit{compatibility condition}:
\begin{equation*}
    \omega \wedge \Omega_2=0,
\end{equation*}
provided that the corresponding metric is positive definite and the normalization condition is satisfied.
Given such forms, using Hitchin's construction \cite[Section 2]{[17]Hitchin]} we may recover the $\text{SU}(3)$-structure as $J=J_{\Omega_2}$ (where $J_{\Omega_2}$ is an almost complex structure depending only on $\Omega_2$), $g(\cdot, \cdot) \coloneqq \omega(\cdot, J \cdot)$ and $\Omega \coloneqq \Omega_1 + i \Omega_2$, where $\Omega_1 \coloneqq J \Omega_2$. For more details about this process, see for example \cite[Section 2.2]{AS22}.
We say that an $\text{SU}(3)$-structure given by the pair $(\omega, \Omega_2)$ is \textit{half-flat} if
$$
\text{d} \Omega_1=0, \quad \text{d} \omega^2 =0.
$$

We consider the 7-dimensional manifold $M= I \times N$, where $N$ is a smooth manifolds of dimension 6 and $I$ is an interval with coordinate $t \in \R$. Let $(\omega(t), \Omega_2(t))$ be a 1-parameter family of $\text{SU}(3)$-structures on $N$ parameterized by $t \in I$. Then, the following forms give a $G_2$-structure on $M$:
\begin{equation}\label{eq:G2 structure}
\begin{array}{ll}
\varphi = \text{d}t \wedge \omega(t) + \Omega_1(t), \\
\psi= \dfrac{\omega^2(t)}{2} - \text{d}t \wedge \Omega_2 (t).
\end{array}
\end{equation}
Every $G_2$-structure in the principal part of a cohomogeneity one manifold can be constructed from an $\text{SU}(3)$-structure in the principal orbits.
Assume that $(\omega(t), \Omega_2(t))$ is half-flat.
This $G_2$-structure is closed if the 1-parameter family $(\omega(t), \Omega_2(t))$ is a solution of:
$$
\dot{\Omega}_1 = \text{d} \omega.
$$
The $G_2$-structure is coclosed if $(\omega(t), \Omega_2(t))$ is a solution of:
$$
\omega \wedge \dot{\omega}= -\text{d} \Omega_2.
$$
Hence, the $G_2$-structure is torsion-free if $(\omega(t), \Omega_2(t))$ is a solution of both equations.

\begin{remark}
Note that a coclosed $G_2$-structure whose restriction to each principal orbit is half-flat only requires $d \Omega_1=0$ as an extra condition, as $d \omega^2 =0$ automatically holds.
\end{remark}

\subsection{Cohomogeneity one manifolds}\label{sec:coh1}
We will give a description of the structure of cohomogeneity one manifolds and their metrics.
For further details, see for instance \cite{Berand1982, [18]Hoelscher, [19]Hoelscher, Ziller07}.
\begin{defi}
A Riemannian manifold $M$ is of \textit{cohomogeneity one} for the action of the compact Lie group $G$ if
$G$ is a closed subgroup of the isometry group of $M$
and has an orbit of codimension one. 
\end{defi}

From now on, we assume that $M$ is a simply connected cohomogeneity one manifold, and $G$ is a compact, connected Lie group. 
By the compactness of $G$, the action is proper and there exists a $G$-invariant Riemannian metric $g$ on $M$; this is equivalent to saying that $G$ acts on the Riemannian manifold $\left(M,g\right)$ by isometries. 
Moreover, we assume that the cohomogeneity one action is \textit{almost effective}, i.e.\ that the set of points of $G$ whose action is trivial is discrete.
Let $\pi\colon M\to M/G$ be the canonical projection onto the orbit space and equip $M/G$ with the quotient topology relative to $\pi$.
The quotient space $M/G$ is then homeomorphic to a circle or an interval \cite{Berand1982}. As we are assuming that $M$ is simply connected, we have that $M/G$ is homeomorphic to an interval $I$.
The inverse images of the interior points of the orbit space $M/G$ are known as \emph{principal orbits}, while the inverse images of the boundary points are called \emph{singular orbits}. We denote by $M^{\text{princ}}$ the union of all principal orbits, which is an open dense subset of $M$, and by $G_p$ the isotropy group at $p\in M$.

First, we will suppose $M$ is compact. It follows that $M/G$ is homeomorphic to the closed interval $I=[0,1]$.
Denote by $\mathcal{O}_-$ and $\mathcal{O}_+$ the two singular orbits $\pi^{-1}\left(0\right)$ and $\pi^{-1}\left(1\right)$, respectively.
Let $\gamma\colon [0,1]\to M$ be a normal geodesic (i.e.\, geodesic orthogonal to every principal orbit) between $\pi^{-1}\left(0\right)$ and $\pi^{-1}\left(1\right)$, which always exists.
Up to rescaling, we can always suppose that the orbit space $M/G$ is such that $\pi \circ \gamma =\text{Id}_{[0,1]}$.
Then, by Kleiner's Lemma, there exists a subgroup $K$ of $G$ such that $G_{\gamma\left(t\right)}=K$ for all $t\in (0,1)$ and $K$ is subgroup of $G_{\gamma\left(0\right)}$ and $G_{\gamma\left(1\right)}$.

For $M$ non-compact, $M/G$ is homeomorphic either to an open interval or to an interval with one closed end.
In the former case, $M$ is a product manifold $M \cong I \times G/K$.
In the latter case, there exists exactly one singular orbit, and $M/G \cong I$ where $I=[0, \infty)$.
Analogously to the compact case, there exists a normal geodesic
$\gamma: [0, \infty) \rightarrow M$ such that $\gamma(0) \in \pi^{-1}(0)$ and we can suppose $\pi \circ \gamma =\text{Id}_{[0,\infty)}$. In addition, there exists a subgroup $K$ of $G$ such that $G_{\gamma(t)}=K$ for all $t \in (0,\infty)$ and $K$ is a subgroup of $G_{\gamma(0)}$. 

Up to conjugation along the orbits, when $M$ is compact we have three possible isotropy groups $H_-\coloneqq G_{\gamma\left(0\right)}$, 
$H_+ \coloneqq G_{\gamma\left(1\right)}$ and $K\coloneqq G_{\gamma\left(t\right)}$, $t\in\left(0,1\right)$. 
When $M$ is non-compact and has one singular orbit, instead,
we have two possible isotropy groups $H \coloneqq G_{\gamma\left(0\right)}$ and $K\coloneqq G_{\gamma\left(t\right)}$, $t\in\left(0,\infty \right)$.
From all of the above, we have that 
\[ M^{\text{princ}} \cong \overset{\circ}{I} \times G/K, 
\] and so, by fixing a suitable global coordinate system, we can decompose the $G$-invariant metric $g$ as
\begin{equation} \label{metric}
g_{\gamma(t)}={\text{d}}t^2 + g_t,
\end{equation}
where ${\text{d}}t^2$ is the $(0,2)$-tensor corresponding to the vector field $\xi \coloneqq \gamma'\left(t\right)$ evaluated at the point $\gamma\left(t\right)$, and $g_t$ is a $G$-invariant metric on the homogeneous orbit $G\cdot \gamma\left(t\right)$ through the point $\gamma\left(t\right)\in M$. 

Now, we will assume $M$ is compact. By the density of $M^{\text{princ}}$ in $M$ and the Tube Theorem, $M$ is homotopically equivalent to
\begin{equation}
\left( G \times_{H_-} D_-\right)\cup_{G/K}\left( G \times_{H_+} D_+\right),
\end{equation}
where the geodesic balls $ D_\pm \coloneqq \text{exp}\left(B_{\varepsilon^{\pm}}\left(0\right)\right)$, 
$B_{\varepsilon^{-}}\left(0\right)\subset T_{\gamma\left(0\right)}\left(G\cdot \gamma\left(0\right) \right)^{\perp}$, $B_{\varepsilon^{+}}\left(0\right)\subset T_{\gamma\left(1\right)}\left(G\cdot \gamma\left(1\right) \right)^{\perp}$, are normal slices to the singular orbits in $\gamma(0), \gamma(1)$. 
Here, $ G \times_{H_\pm} D_\pm$ is the associated fiber bundle to the principal bundle $G \to G/H_i$ with type fiber $D_\pm$.
By Bochner's linearization theorem, $M$ is also homotopically equivalent to 
\begin{equation}\label{DecompCohom1}
\left( G \times_{H_-} B_{\varepsilon^{-}}\left(0\right)\right)\cup_{G/K}\left( G \times_{H_+} B_{\varepsilon^{+}}\left(0\right)\right).
\end{equation}
The isotropy groups $H_\pm$ act on $ B_{\varepsilon^{\pm}}\left(0\right)$ via the slice representation and,
since the boundary of the tubular neighborhood $\text{Tub}(\mathcal{O}_\pm) \coloneqq  G \times_{H_\pm} B_{\varepsilon^{\pm}}\left(0\right)$,
is identified with the principal orbit $G/K$ and the $G$-action on 
$\text{Tub}(\mathcal{O}_\pm) $ is identified with the $H_\pm$-action
on $ B_{\varepsilon^{\pm}} \left( 0 \right)$, then $H_\pm$ acts transitively on
the sphere $S^{l_\pm} \coloneqq \partial  B_{\varepsilon^{\pm}}$, $l_\pm>0$ still having isotropy $K$. The normal spheres $S^{l_\pm}$ are thus the homogeneous
spaces $H_\pm/K$.

The collection of $G$ with its isotropy groups $G\supset H_-, H_+ \supset K $ is called the \emph{group diagram} of the cohomogeneity one manifold $M$.
Viceversa, let $G\supset H_-, H_+ \supset K $ be compact groups with $H_\pm/K \cong S^{l_\pm}$.
By the classification of transitive actions on spheres one has that the $H_\pm$-action on $S^{l_\pm}$ is linear and hence it can be extended
to an action on $ B_{\varepsilon^{\pm}}$ bounded by $S^{l_\pm}$.
Therefore, (\ref{DecompCohom1}) defines a cohomogeneity one manifold $M$.
Analogously, if $M$ is a non-compact cohomogeneity one manifold with one singular orbit, we define the \emph{group diagram} of $M$ to be the collection of $G$ and the isotropy groups $G \supset H \supset K$, where the homogeneous space $H/K$ will be a sphere. The converse is also true:  the group diagram defines a non-compact cohomogeneity one manifold $M$. In these cases, $M$ is homotopically equivalent to  $G\times_{H}B_{\epsilon}(0)$, where $B_{\epsilon}(0)\subseteq T_{\gamma(0)} (G\cdot \gamma(0))^{\perp}$ as before.

\section{$G_2$-structures on cohomogeneity one manifolds}\label{sec:SU(2)^2}

Let $M$ be a seven-dimensional simply connected manifold of cohomogeneity one for the almost effective action of a compact connected Lie group $G$. Let $\varphi$ be a $G_2$-structure on $M$ which is preserved by the action, and denote $\psi= *_\varphi \varphi$. Let $K$ be the principal isotropy group.

We require that the action of $G$ preserves the $G_2$-structure, so the principal isotropy group $K$ acts on $T_pM$ with $K \subset G_2$, for any principal point $p$ in $M$. If we denote $\mathfrak{k}=\text{Lie}(K)$, then $\mathfrak{k} \subset \mathfrak{g_2}$.
Let $\xi =\partial / \partial t$, where $t$ is the cohomogeneity one parameter.
The subgroup of $G_2$ that fixes the subspace $\langle \xi |_p \rangle$ of $T_p M$ is $\text{SU}(3)$.
Hence, the requirement that $G$ acts on $M$ with cohomogeneity one preserving $\varphi$ implies that the representation of the isotropy group $K=K_p$ on the tangent space of the principal orbit is a subgroup of $\text{SU}(3)$.
Therefore at the Lie algebra level $ \mathfrak{k}\coloneqq \text{Lie}\left(K\right)$ is $\{0\}, \, \mathbb{R}$, $\mathfrak{su}\left(2\right)$, $2\R$, $\mathfrak{u} =\mathfrak{su}(2) \oplus \R$ or $\mathfrak{su}(3)$. 
Note that $\mathfrak{su}(2)$ has two different embeddings in $\mathfrak{su}(3)$.

As $\dim \mathfrak{g}- \dim \mathfrak{k}=6$, we can write an initial list of possible possible decompositions of the Lie algebra $\mathfrak{g}$ into simple summands. 
This list can be reduced if we assume that the manifold $M$ is simply connected, using \cite[Proposition 1.8]{[18]Hoelscher} for the compact case and \cite[Proposition 3.1]{AS22} for the non-compact case.
If $\mathfrak{k} \subset \mathfrak{g}$ is an ideal, then the principal isotropy would act trivially on $M$ and the action would not be almost effective. 
Then, up to finite quotients, the principal orbits are one of the following types:
\begin{enumerate}
    \item $S^3 \times S^3 =\text{SU}(2)^2 = \dfrac{\text{SU}(2)^2 \times \text{U}(1)}{\text{U}(1)} =\dfrac{\text{SU}(2)^3}{\text{SU}(2)}$;
    \item
    $S^5 \times S^1 =\dfrac{\text{SU}(3) \times \text{U}(1)}{\text{SU}(2)}$;
    \item 
    $F_{1,2}=\dfrac{\text{SU(3)}}{T^2}$ ($\mathfrak{k}=2\R$, $\mathfrak{g}=\mathfrak{su}\left(3\right)$);
    \item
    $\C P(3)= \dfrac{\text{Sp}(2)}{\text{SU}(2) \text{U}(1)}$ ($\mathfrak{k}=\mathfrak{su}\left(2\right) \oplus \R$, $\mathfrak{g}=\mathfrak{sp}(2)$);
    \item
    $S^6=\dfrac{G_2}{\text{SU}(3)}$ ($\mathfrak{k}=\mathfrak{su}\left(3\right)$, $\mathfrak{g}=\mathfrak{g}_2$).
\end{enumerate}
For the last three cases, and up to a finite quotient, the group $G$ acting is $\text{SU}(3)$, $\text{Sp}(2)$ and $G_2$, respectively.
In \cite{Cleyton01}, Cleyton and Swann studied $G_2$-structures with a cohomogeneity one action of a compact Lie group $G$.
They only consider the case when the group $G$ acting is simple, so they consider acting groups $\text{SU}(3)$, $\text{Sp}(2)$ and $G_2$.
Later, they studied the coclosed (or cosymplectic, as they call them) structures and determined the topological types of manifolds admitting such structures. They also found new examples of compact manifolds with coclosed $G_2$-structures.
On account of this, our focus will be on the first case, where there is an invariant action of $\text{SU}(2)^2$. We will also consider situations where there are extra symmetries, such as extra $\text{SU}(2)$.

\subsection{$G_2$-structures on $\text{SU}(2)^2$-invariant cohomogeneity one manifolds}\label{sec:coh1g2}

In this section we construct an $\text{SU}(2)^2$-invariant half-flat $\text{SU}(3)$-structure on the principal part $\overset{\circ}{I_t} \times N$ 
of a cohomogeneity one manifold $M$, where $I_t$ is either $[0, \infty)$ or $[0,1]$, and $N=G/K$. 
We will describe the $\text{SU}(3)$-structure, the associated $G_2$-structure given by equation (\ref{eq:G2 structure}) and the corresponding metric on $M$ in terms of six real valued functions, by using an orthonormal basis given by a Milnor frame. 
This follows from the work by Schulte-Hegensbach in his PhD thesis \cite{Schulte10} and Madsen and Salamon in \cite{MS13}.
Here we will follow the reformulation by Lotay and Oliveira from \cite[Section 2]{Lotay17}.
We note that the induced $\text{SU}(3)$-structures $(\omega(t), \Omega_2(t))$ on the hypersurfaces of the manifold $N$ do not need to be half-flat (this is guaranteed when $M$ is a $G_2$-manifold, but when the $G_2$-structure is coclosed only the condition $d \omega^2=0$ is guaranteed). We make this assumption in order to use the description below to facilitate the search for new examples coclosed $G_2$-structures.

We can construct a basis of $\mathfrak{su}(2) \oplus \mathfrak{su}(2)$ written as
$
\mathfrak{su}(2) \oplus \mathfrak{su}(2) = \mathfrak{su}^+(2) \oplus \mathfrak{su}^-(2)
$.
Let $\{ T_i \}_{i=1}^3$ be a basis for $\mathfrak{su}(2)$ such that $[T_i, T_j] = 2 \epsilon_{ijk} T_k$. Then
$$
T_i^+=(T_i, T_i), \quad T_i^-=(T_i, -T_i),
$$
define a basis for $\mathfrak{su}^+(2)$ and $\mathfrak{su}^-(2)$ respectively.
Let $\{ \eta_i^+ \}_{i=1}^3$ and $\{ \eta_i^- \}_{i=1}^3$ be dual basis to $\{ T_i^+ \}_{i=1}^3$ and $\{ T_i^- \}_{i=1}^3$ respectively. 
Then the structure equations are
$$
\begin{array}{ll}
\text{d} \eta_i^+ = -\epsilon_{ijk} (\eta_j^+ \wedge \eta_k^+ + \eta_j^- \wedge \eta_k^-), \\
\text{d} \eta_i^- = -2\epsilon_{ijk} \eta_j^- \wedge \eta_k^+.
\end{array}
$$
We will denote $\eta_{ij}^\pm = \eta_i^\pm \wedge \eta_j^\pm$, and $\eta_{123}^\pm= \eta_1^\pm \wedge \eta_2^\pm \wedge \eta_3^\pm$.
A generic $\text{SU}(2)^2$-invariant half-flat $\text{SU}(3)$-structure on the principal bundle is given by
\begin{equation}\label{eq:SU(3)structure}
\begin{array}{ll}
\omega= 4 \sum_{i=1}^3 A_i B_i \eta_i^- \wedge \eta_i^+, \\
\Omega_1 = 8 B_1 B_2 B_3 \eta_{123}^- -4 \sum_{i=1}^3 \epsilon_{ijk} A_i A_j B_k  \eta_i^+ \wedge \eta_j^+ \wedge \eta_k^-, \\
\Omega_2 = -8 A_1 A_2 A_3 \eta_{123}^+ +4 \sum_{i=1}^3 \epsilon_{ijk} B_i B_j A_k  \eta_i^- \wedge \eta_j^- \wedge \eta_k^+,
\end{array}
\end{equation}
for six real-valued functions $A_i, B_i:I_t \rightarrow \R$, $i=1,2,3$, $A_i(t), B_i(t) \neq 0$ for $t$ in the interior of $I_t$.
The compatible metric determined by this $\text{SU}(3)$-structure on $\{ t \} \times N$ is:
$$
g_t =
\sum_{i=1}^3 (2A_i)^2 \eta_i^+ \otimes \eta_i^+ + \sum_{i=1}^3 (2B_i)^2 \eta_i^- \otimes \eta_i^-,
$$
and the resulting metric on $I_t \times N$, compatible with the $G_2$-structure $\varphi = \text{d}t \wedge \omega + \Omega_1$, is given by 
\begin{equation}\label{eq:g}
    g = \text{d}t^2 + g_t.
\end{equation}
Hence, we can see the functions $A_i(t)$ and $B_i(t)$ as describing deformations of the standard cone metric.

\begin{remark}
For a $G_2$-structure given by a half-flat $\text{SU}(3)$-structure as before, using the previous expressions and equation (\ref{eq:G2 structure}), we have that $\psi \iprod d \varphi = 0$, so
$$
\tau_0=0.
$$
This means that for all of the $G_2$-structures considered, the scalar curvature is zero. 
\end{remark}

\begin{remark}\label{rmk:flux}
In seven dimensional heterotic string theory, we say that the \textit{flux} is a 3-form given by
$$
H = \frac{1}{6} \tau_0 \varphi - \tau_1 \iprod \psi - \tau_3.
$$
One of the equations of the heterotic $G_2$ system, known as the \textit{heterotic Bianchi identity} or \textit{anomaly free condition}, relates the exterior differential of the flux to the curvatures of two gauge fields.
If the $G_2$-structure is coclosed as before, and using that $\tau_0=0$ from the previous remark, we get
$$
\begin{array}{ll}
dH
&=\frac{1}{6} \prod_{i=1}^3 | A_i B_i |  \\
&[16(-4 A_1 B_1 +(B_1 B_2 B_3)^\cdot -(A_2 A_3 B_1)^\cdot +(A_3 A_1 B_2)^\cdot +(A_1 A_2 B_3)^\cdot) \eta_{23}^- \wedge \eta_{23}^+ \\
&+16(-4 A_2 B_2 +(B_1 B_2 B_3)^\cdot +(A_2 A_3 B_1)^\cdot -(A_3 A_1 B_2)^\cdot +(A_1 A_2 B_3)^\cdot) \eta_{13}^- \wedge \eta_{13}^+ \\
&+16(-4 A_3 B_3 +(B_1 B_2 B_3)^\cdot +(A_2 A_3 B_1)^\cdot +(A_3 A_1 B_2)^\cdot -(A_1 A_2 B_3)^\cdot) \eta_{12}^- \wedge \eta_{12}^+]. \\
\end{array}
$$
This expression will be relevant when looking for solutions of the heterotic $G_2$ system for the $\text{SU}(2)^2$-invariant cohomogeneity one manifolds and the described coclosed $G_2$-structures.
This will be subject of future work.
\end{remark}

\subsection{Extension to the singular orbits}\label{sec:extension}

The union of the principal orbits $M^\text{princ}$ of the manifold $M$ is a dense subset of $M$. Hence, it is possible to extend the metric on the principal part to singular orbit(s) to give a metric on the manifold $M$. However, there are some extra conditions that we need to impose in order to ensure that this extension is smooth.
These conditions follow from a method developed by Eschenburg and Wang \cite{EW00} to find when a metric (or more generally, a tensor) extends smoothly to a singular orbit.
In \cite{verdiani2020smoothness}, Verdiani and Ziller gave an efficient way of checking the conditions for a smooth extension of the metric.
To use this method, we need to fix our cohomogeneity one manifold. We will consider two situations, one of a compact and one of a non-compact manifold.

As explained in Section \ref{sec:coh1}, a non-compact cohomogeneity one manifold is given by a homogeneous vector bundle, while a compact one by the union of two homogeneous disc bundles. 
We are interested in the smoothness conditions near a singular orbit, so we restrict ourselves to only one such bundle.

The non-compact manifold that we are going to consider is $M= \R^4 \times S^3$, seen as a cohomogeneity one manifold with group diagram $\text{SU}(2) \times \text{SU}(2) \supset \Delta \text{SU}(2) \supset \{ 1 \}$.
In particular, we consider the embedding $\R^4 \times S^3 \hookrightarrow \mathbb{H} \times \mathbb{H}$ and let $\text{SU}(2) \times \text{SU}(2)$ act via
$$
(a_1 , a_2) \cdot (p,q) = (a_1 p, a_1 q \bar{a}_2).
$$
$\R^4 \times S^3$ is diffeomorphic to the total space of the spinor bundle $\mathcal{S} \rightarrow S^3$ over the 3-sphere, which can be described as the quotient 
$$
\dfrac{\text{SU}(2) \times \text{SU}(2) \times \R^4}{\Delta \text{SU}(2)},
$$
where $\text{SU}(2)$ is acting on the right diagonally.
One of the reasons why we are interested in this manifold is that it admits torsion-free $G_2$-structures \cite{BS89, Brandhuber01, Bogoyavlenskaya13} and $G_2$-instantons \cite{Lotay17, Clarke14}.

The next Lemma tells us the conditions on the functions from the previous section $A_i, B_i$,
$i=1,2,3$, for the metric to extend to the singular orbit $Q= \text{SU}(2)^2/ \Delta \text{SU}(2) \cong S^3$.

\begin{lemma}\cite[Lemma 8]{Lotay17}\label{lemma8}
The metric $g$ in (\ref{eq:g}) extends smoothly (as a metric) over the singular orbit $Q= \text{SU}(2)^2/ \Delta \text{SU}(2)$ if and only if $A_i, B_i$ are non-zero for $t>0$ and:
\begin{enumerate}[label=(\roman*)]
    \item the $A_i$'s are odd with $\dot{A}_i(0)=1/2$;
    \item the $B_i$'s are even with $B_1(0)=B_2(0)=B_3(0) \neq 0$ and $\ddot{B}_1(0)=\ddot{B}_2(0)=\ddot{B}_3(0)$.
\end{enumerate}
\end{lemma}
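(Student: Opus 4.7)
The plan is to apply the Eschenburg--Wang smoothness criteria for invariant tensors at a singular orbit \cite{EW00}, in the streamlined formulation of Verdiani--Ziller \cite{verdiani2020smoothness}, which reduce smoothness of the metric to representation-theoretic conditions on its Taylor coefficients along a normal geodesic. The key input is the slice representation: the tubular neighbourhood of $Q = \text{SU}(2)^2/\Delta\text{SU}(2) \cong S^3$ is the associated bundle $\text{SU}(2)^2 \times_{H_-} V$ with $V \cong \mathbb{H}$ the four-dimensional slice. The isotropy $H_- = \Delta\text{SU}(2) \cong \text{SU}(2)$ acts on $V$ by standard quaternionic left multiplication (transitive on the unit sphere with trivial principal isotropy) and on $T_{p_0}Q \cong \mathfrak{su}(2)$ by the irreducible adjoint representation, so $V$ and $T_{p_0}Q$ are non-isomorphic irreducible $H_-$-modules.

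I would then carry out the pointwise analysis at $p_0 \in Q$. Since the adjoint representation is irreducible, $H_-$-invariance of $g|_{p_0}$ restricted to $T_{p_0}Q$ forces $\sum (2B_i(0))^2(\eta_i^-)^2$ to be a scalar multiple of the invariant inner product on $\mathfrak{su}(2)$, giving $B_1(0) = B_2(0) = B_3(0) \neq 0$. On the slice, the leading-order metric must be the Euclidean $dt^2 + t^2 g_{S^3}$; matching $\sum (2A_i)^2(\eta_i^+)^2$ with $t^2$ times the round $S^3$-metric, via the identification of the Killing vector $T_i^+$ at $x \in V$ with the quaternionic product $T_i \cdot x$ of norm $|x| = t$, yields $A_i(0) = 0$ and $\dot A_i(0) = 1/2$, the factor $1/2$ coming from the $(2A_i)^2$ normalization.

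The parity conditions follow from smoothness on the slice: $H_-$-invariance forces the metric coefficients to depend on $x \in V$ only through $|x|^2 = t^2$, so $A_i^2$ and $B_i^2$ are smooth functions of $t^2$; combined with $A_i(0) = 0$ and $B_i(0) \neq 0$, this yields $A_i$ odd and $B_i$ even. Finally, the equality $\ddot B_1(0) = \ddot B_2(0) = \ddot B_3(0)$ comes from $H_-$-equivariance of the second jet of $g$ along a normal geodesic: the $t^2$-coefficient of the metric in the $T_{p_0}Q$-directions is an $H_-$-invariant element of $S^2 V^* \otimes S^2 (T_{p_0}Q)^*$ evaluated on a radial direction in $V$, and only the trivial summand of $S^2 V^*$ contributes radially, so irreducibility of the adjoint action on $T_{p_0}Q$ forces the correction to again be a scalar multiple of the invariant form on $\mathfrak{su}(2)$. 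Sufficiency of (i)--(ii) is then immediate by reconstructing $g$ from these Taylor coefficients.

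The main obstacle is this second-order step: one must carefully enumerate the $H_-$-invariant summands in the second jet and verify that no cross-contributions from $V \otimes T_{p_0}Q$ produce anisotropic corrections to the $B_i$ at order $t^2$. The Verdiani--Ziller framework pre-organizes these representation-theoretic data at each order, making the check manageable and the equivalence with conditions (i)--(ii) transparent.
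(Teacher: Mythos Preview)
The paper does not give its own proof of this lemma: it is quoted verbatim as \cite[Lemma 8]{Lotay17}, and the only justification the paper offers is the remark in Section~\ref{sec:extension} that such smoothness conditions ``follow from a method developed by Eschenburg and Wang \cite{EW00}'' with the Verdiani--Ziller reformulation \cite{verdiani2020smoothness} as an efficient check. Your proposal follows precisely this route, so there is nothing to compare against beyond the paper's pointer to the same framework.

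Your sketch is substantively correct. The identifications are right: $\mathfrak{su}^+(2)$ is the Lie algebra of the isotropy $H_-=\Delta\text{SU}(2)$, so the $A_i$-directions collapse onto the slice $V\cong\mathbb{H}$ while the $B_i$-directions span $T_{p_0}Q\cong\mathfrak{su}^-(2)$ with the adjoint $H_-$-action; irreducibility then forces the zeroth- and second-order equalities among the $B_i$, and matching the round $S^3$ metric on the normal sphere gives $\dot A_i(0)=1/2$. One minor point: your parity argument (``metric coefficients depend only on $|x|^2$'') is phrased a bit loosely. The coefficients are already functions of $t$ along the geodesic; the parity comes from how each tensor component transforms under the slice representation when you write the metric as a smooth $H_-$-equivariant map $V\to S^2(V\oplus T_{p_0}Q)$. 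In particular, the Killing fields $T_i^+$ are linear on $V$, so $\eta_i^+$ scales like $1/t$ and $A_i^2/t^2$ must be smooth and even in $t$, forcing $A_i$ odd; the $\eta_i^-$ are basic and $B_i^2$ must itself be smooth and even. This is exactly what the Eschenburg--Wang/Verdiani--Ziller tables encode, and your acknowledged ``main obstacle'' at second order is handled there as well.
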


Note that condition (i) says that the metrics have to be almost round near to the singular orbit, while condition (ii) guarantees that the singular orbit is totally geodesic.

\begin{remark}\label{rmk:inequivactions}
There is another inequivalent action of $\text{SU}(2)^2$ on $\R^4 \times S^3$, with group diagram $\text{SU}(2) \times \text{SU}(2) \supset \text{SU}(2) \times \{ 1 \} \supset \{ 1 \}$. 
Moreover, there are infinitely many distinct seven-dimensional cohomogeneity one manifold with one singular orbit and an inequivalent action of $\text{SU}(2)^2$. They have singular isotropy group $K_{m,n} = \{ (e^{i \theta_1}, e^{i \theta_2}) \in T^2 | e^{i(m \theta_1 +n \theta_2)} =1\}$, where $m,n$ are co-prime (this guarantees that the manifold is simply connected), and principal isotropy group isomorphic to $\mathbb{Z}_{2 |m+n|}$. The manifolds constructed from that action are denoted by $M_{m,n}$.
In \cite{FHN21}, Foscolo, Haskins and Nordstr\"om constructed 1-parameter families of torsion-free $G_2$-structures on these manifolds, known respectively as the $\mathbb{D}_7$ family and the infinite extension of the $\mathbb{C}_7$ family.
It would be interesting to study coclosed $G_2$-structures on those manifolds as well, for which the (different) conditions for a smooth extension to the singular orbit can be found in \cite[Proposition 4.1]{FHN21}. 
\end{remark}

For a compact manifold, we consider the cohomogeneity one manifold $M=S^4 \times S^3$ with group diagram $\text{SU}(2) \times \text{SU}(2) \supset \Delta \text{SU}(2), \Delta \text{SU}(2) \supset \{ 1 \}$.
We consider the embedding $S^3 \times S^4 \hookrightarrow \mathbb{H} \times (\mathbb{H} \times \R)$, then $S^3 \times S^3$ acts on $M$ via
$$
(a_1 , a_2) \cdot (p,q, t) = (a_1 p a_2^{-1}, a_2 q, t).
$$
The metric $g$ in (\ref{eq:g}) extends smoothly (as a metric) over the singular orbits $Q_{1,2}= \text{SU}(2)^2/ \Delta \text{SU}(2)$ if the conditions for $A_i, B_i$ from the previous lemma, which are now defined on the closed interval $[0,1]$, are satisfied around both singular orbits.
Note that in Lemma \ref{lemma8}, $t$ is the arclength parameter along a geodesic intersecting the principal orbits orthogonally, but we assume that the length of the interval $I_t$ is 1 instead of an arbitrary positive number, as it does not affect the discussion, and in order to be consistent with the notation of Section \ref{sec:coh1}.

\begin{remark}
For the two previous situations, the corresponding normal sphere(s) $H/K$ have dimension 3. It would be interesting to explore another example where there is a normal sphere with dimension 1, such as the Bazaikin-Bogoyavlenskaya manifolds \cite{BB13}, which have group diagram $\text{SU}(2)^2 \supset \text{U}(1) \supset \mathbb{Z}/4$.
\end{remark}

\subsection{Coclosed equations}\label{sec:coclosedeqns}

We derive equations for the functions $A_i$, $B_i$ such that the $G_2$-structure obtained via (\ref{eq:SU(3)structure}) is coclosed.
Recall from Section \ref{subsec:SU(3)} that they will be derived from
\begin{equation}\label{eq:coclosed}
\omega \wedge \dot{\omega} = -\text{d} \Omega_2.
\end{equation}
Using the expression for $\omega$ and $\Omega_2$ from equation (\ref{eq:SU(3)structure}), we obtain the system of ODEs that will give us the coclosed conditions. We present it in the following Proposition.

\begin{prop}
Let $M$ be a seven-dimensional simply connected cohomogeneity one manifold under the action of $\text{SU}(2)^2$, with a $G_2$-structure constructed from a half-flat $\text{SU}(3)$-structure. Let the $\text{SU}(3)$-structure $(\omega, \Omega_1, \Omega_2)$ be written as in (\ref{eq:SU(3)structure}).
Then, the equations for the $G_2$-structure to be coclosed are:
\begin{equation}\label{eq:coclosedAB}
\begin{cases}
A_2 B_2 (\dot{A}_3 B_3 + A_3 \dot{B}_3) +& A_3 B_3 (\dot{A}_2 B_2 + A_2 \dot{B}_2) =\\
&A_1 A_2 A_3 - B_2 B_3 A_1 + B_1 B_3 A_2 + B_1 B_2 A_3, \\
A_1 B_1 (\dot{A}_3 B_3 + A_3 \dot{B}_3) +& A_3 B_3 (\dot{A}_1 B_1 + A_1 \dot{B}_1) =\\
&A_1 A_2 A_3 + B_2 B_3 A_1 - B_1 B_3 A_2 + B_1 B_2 A_3,  \\
A_1 B_1 (\dot{A}_2 B_2 + A_2 \dot{B}_2) +& A_2 B_2 (\dot{A}_1 B_1 + A_1 \dot{B}_1) =\\
&A_1 A_2 A_3 + B_2 B_3 A_1 + B_1 B_3 A_2 - B_1 B_2 A_3.
\end{cases}
\end{equation}
\end{prop}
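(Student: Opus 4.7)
The plan is to substitute the explicit expressions (\ref{eq:SU(3)structure}) for $\omega$ and $\Omega_2$ into the coclosed condition $\omega\wedge\dot\omega = -d\Omega_2$ derived in Section \ref{subsec:SU(3)}, and to read off (\ref{eq:coclosedAB}) by comparing coefficients of the basis $4$-forms $\eta_{ij}^-\wedge\eta_{ij}^+$ (for $i<j$) on $N$.

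For the left-hand side, differentiating $\omega = 4\sum_i A_iB_i\,\eta_i^-\wedge\eta_i^+$ in $t$ and wedging with $\omega$ kills the diagonal terms, since each $\eta_i^-\wedge\eta_i^+$ is a decomposable $2$-form whose square vanishes. For $i\neq j$, the identity $\eta_i^-\wedge\eta_i^+\wedge\eta_j^-\wedge\eta_j^+ = -\eta_{ij}^-\wedge\eta_{ij}^+$, combined with summing the two orderings $(i,j)$ and $(j,i)$, yields
\begin{equation*}
\omega\wedge\dot\omega = -16\sum_{i<j}\bigl(A_iB_i(A_jB_j)^{\cdot} + A_jB_j(A_iB_i)^{\cdot}\bigr)\,\eta_{ij}^-\wedge\eta_{ij}^+,
\end{equation*}
whose coefficients, after expanding the product derivatives, are exactly the left-hand sides of (\ref{eq:coclosedAB}) up to a common prefactor.

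For the right-hand side, I would apply the Leibniz rule to each term of $\Omega_2$ and expand using the structure equations $d\eta_i^+ = -\epsilon_{ijk}(\eta_j^+\wedge\eta_k^+ + \eta_j^-\wedge\eta_k^-)$ and $d\eta_i^- = -2\epsilon_{ijk}\,\eta_j^-\wedge\eta_k^+$. Differentiating $-8A_1A_2A_3\,\eta_{123}^+$ via $d\eta^+$ produces both pure $\eta^+$-type $4$-forms and $\eta_{jk}^-\wedge\eta_{jk}^+$-type $4$-forms; expanding the sum $4\sum\epsilon_{ijk}B_iB_jA_k\,\eta_i^-\wedge\eta_j^-\wedge\eta_k^+$ through the three Leibniz pieces and the structure equations contributes further such terms. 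The essential step is to verify that all potential ``cross'' $4$-forms (those mixing a different number of $\eta^+$ and $\eta^-$ factors, or coupling distinct index pairs) cancel, so that $d\Omega_2$ is in fact a linear combination only of $\eta_{23}^-\wedge\eta_{23}^+$, $\eta_{13}^-\wedge\eta_{13}^+$, and $\eta_{12}^-\wedge\eta_{12}^+$.

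Matching coefficients on both sides of $\omega\wedge\dot\omega = -d\Omega_2$ and clearing the common numerical factor will then produce the three ODEs in (\ref{eq:coclosedAB}); the asymmetric sign pattern on their right-hand sides, where for each equation exactly one of the three $B_jB_kA_i$ monomials enters with a minus sign, is precisely what emerges from the Leibniz expansion of $-d\Omega_2$. I expect the main obstacle to lie in the sign and index bookkeeping in the computation of $d\Omega_2$, and in particular the verification of the cross-term cancellation; as a sanity check, the companion half-flat equation $d\omega^2 = 0$, which is the $dt$-free piece of $d\psi = 0$, should drop out automatically from the ansatz (\ref{eq:SU(3)structure}) as noted in the Remark preceding the Proposition, so that no additional ODEs arise beyond those in (\ref{eq:coclosedAB}).
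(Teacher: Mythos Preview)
Your proposal is correct and follows exactly the approach the paper takes: the paper simply states that one substitutes the expressions for $\omega$ and $\Omega_2$ from (\ref{eq:SU(3)structure}) into $\omega\wedge\dot\omega=-\text{d}\Omega_2$ and reads off the system, without spelling out the intermediate steps. Your outline in fact gives more detail than the paper does, and the computation of $\omega\wedge\dot\omega$ you sketch is accurate.
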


If we define
$$
\begin{array}{ll}
D_1 =A_2 B_2 A_3 B_3, \\
D_2 =A_1 B_1 A_3 B_3, \\
D_3 =A_1 B_1 A_2 B_2,
\end{array}
$$
then we can write (\ref{eq:coclosedAB}) as a system of ODEs for the functions $D_1, D_2, D_3$, where the coefficients depend on free functions $A_1, A_2, A_3$.
To ensure that the induced metrics are smooth, Lemma \ref{lemma8} leads us to consider $A_i$'s which are non-zero (positive, more precisely) for $t$ in the interior of $I_t$ and satisfy certain boundary conditions.
Hence, this system will be well defined in the interior of $I_t$ and we will study the behaviour for the boundary points, where the $A_i$'s are zero.
Note that the initial conditions from Lemma \ref{lemma8} correspond to the following conditions for the functions $D_i$
\begin{equation}\label{eq:Dinitial}
D_i(t)= \dfrac{b_0^2}{4} t^2 + O(t^4),
\end{equation}
and $D_i$ even, $i=1,2,3$.
The system is
\begin{equation}\label{eq:coclosedD}
\begin{cases}
\dot{D}_1 = A_1 A_2 A_3 - \dfrac{A_1^2 D_1}{A_1 A_2 A_3} + \dfrac{A_2^2 D_2}{A_1 A_2 A_3} + \dfrac{A_3^2 D_3}{A_1 A_2 A_3}, \\
\dot{D}_2 = A_1 A_2 A_3 + \dfrac{A_1^2 D_1}{A_1 A_2 A_3} - \dfrac{A_2^2 D_2}{A_1 A_2 A_3} + \dfrac{A_3^2 D_3}{A_1 A_2 A_3}, \\
\dot{D}_3 = A_1 A_2 A_3 + \dfrac{A_1^2 D_1}{A_1 A_2 A_3} + \dfrac{A_2^2 D_2}{A_1 A_2 A_3} - \dfrac{A_3^2 D_3}{A_1 A_2 A_3}.
\end{cases}
\end{equation}
Writing $D= (D_1, D_2, D_3)^T$ the system of equations now becomes
\begin{equation}\label{eq:coclosedDmatrix}
\dot{D}= 
\begin{pmatrix}
1 \\
1 \\
1
\end{pmatrix}
A_1 A_2 A_3 + \dfrac{1}{A_1 A_2 A_3} 
\begin{pmatrix}
-A_1^2 & A_2^2 & A_3^2 \\
A_1^2 & -A_2^2 & A_3^2 \\
A_1^2 & A_2^2 & -A_3^2
\end{pmatrix}
D.
\end{equation}
We will write it with matrix notation.
Let 
\begin{equation}\label{eq:MN}
M= \dfrac{1}{A_1 A_2 A_3}
\begin{pmatrix}
-A_1^2 & A_2^2 & A_3^2 \\
A_1^2 & -A_2^2 & A_3^2 \\
A_1^2 & A_2^2 & -A_3^2
\end{pmatrix}
, \quad 
N= A_1 A_2 A_3
\begin{pmatrix}
1 \\
1 \\
1 
\end{pmatrix}.
\end{equation}
Then our system is
\begin{equation}\label{eq:linearsysODE}
\dot{D}=M(t)D + N(t).
\end{equation}

\begin{remark}
We would like to recover the $B_i$'s from the $D_i$'s with:
\begin{equation}\label{eq:B_i}
\begin{array}{ll}
B_i(t) = \text{sign}(b_0) \sqrt{\dfrac{D_j D_k}{D_i A_i^2}}, t >0, \quad B_i(0)=b_0,
\end{array}
\end{equation}
where
$\ddot{D}_i(0) =b_0^2/2$
and $\{ i,j,k\}$ is a cyclic permutation of $\{ 1,2,3\}$. 
We can only do that if $D_i D_j /D_k >0$ for $t>0$.
\end{remark}

\subsection{$\text{SU}(2)^3$-invariant coclosed $G_2$-structures}\label{sec:examples}
We now study some special cases of coclosed $G_2$-structures, more concretely by considering the case where the $G_2$-structure enjoys an extra $\text{SU}(2)$-symmetry, i.e.\ $A_1 = A_2 = A_3$ and $B_1 = B_2 = B_3$. We will also study explicitly the case $A_1 = A_2 = A_3$ and show that the extra $\text{SU}(2)$-symmetry follows.

\subsubsection{Case $A_1=A_2=A_3$ and $B_1=B_2=B_3$.}\label{section:AAABBB}
We only have one equation:
$$
(A_1 B_1)^\cdot = \dfrac{1}{2} \dfrac{A_1^2 + B_1^2}{B_1}.
$$
If we define $D=A_1^2 B_1^2$, we get
\begin{equation}\label{eq:AAABBB}
    \dot{D} =  A_1^3+ \dfrac{D}{A_1}.
\end{equation}

\begin{ex}
If we take $A_1=t/2$, our equation is
$$
\dot{D} =  \dfrac{t^3}{8}+ \dfrac{2D}{t}.
$$
This has solutions depending on one constant, which we denote $c$:
$$
D= ct^2+ \dfrac{t^4}{16}.
$$
Hence renaming $4c$ by $c$:
$$
B_1= \sqrt{\dfrac{t^2}{4}+c}.
$$
This is an even function, and $B_1(0) \neq 0$ if and only if $c \neq 0$. For $c>0$, since it is well defined for $t \in [0, \infty)$, we can smoothly extend the metric to the singular orbit at $t=0$.
The corresponding metric is 
$$
g =dt^2 + t^2 \sum_{i=1}^3 \eta_i^+ \otimes \eta_i^+ + (t^2 +c) \sum_{i=1}^3 \eta_i^- \otimes \eta_i^-.
$$
In the limit $c \rightarrow 0$, the metric is conical.
\end{ex}

\begin{ex}
One of the first examples of a torsion-free $G_2$-structure with a complete metric is the one that gives the Bryant--Salamon metric on $\R^4 \times S^3$ from \cite{BS89}. The $G_2$-structure is then coclosed. 
The Bryant--Salamon metric is actually $\text{SU}(2)^3$-invariant, and it can be realised as a cohomogeneity one manifold with group diagram $\text{SU}(2)^3 \supset \text{SU}(2)^2 \supset \text{SU}(2)$ (where $\text{SU}(2)$ is embedded in $\text{SU}(2)^3$ as $1 \times 1 \times \text{SU}(2)$ and $\text{SU}(2)^2$ as $\Delta_{1,2} \text{SU}(2) \times \text{SU}(2)$), as well as with an action of $\text{SU}(2)^2$ in multiple inequivalent ways.
The extra symmetry means that $A_1 =A_2 =A_3$ and $B_1=B_2=B_3$, and the torsion-free equations reduce to 
$$
\dot{A_1}= \dfrac{1}{2} \left( 1- \dfrac{A_1^2}{B_1^2} \right), \qquad \dot{B_1}=\dfrac{A_1}{B_1}.
$$
There is a solution 
$$
A_1= \dfrac{r}{3} \sqrt{1-r^{-3}} , \qquad B_1 =\dfrac{r}{\sqrt{3}},
$$
where $r \in [1, + \infty)$ is a coordinate defined implicitly by
$$
t(r)= \int_1^r \dfrac{ds}{\sqrt{1-s^{-3}}},
$$
and $t$ denotes the arc length parameter.
When $t \rightarrow \infty$, $A_1(t) \sim t/3$ and $B_1(t) \sim t/\sqrt{3}$.
The metric will then be
$$
g=dt^2 + \sum_{i=1}^3  \left(  \dfrac{4 r^2}{9}(1 - r^{-3}) \eta_i^+ \otimes \eta_i^+ + \dfrac{4r^2}{3} \eta_i^- \otimes \eta_i^- \right).
$$
The asymptotic cone over of this metric is the standard homogeneous nearly K\"ahler structure on $S^3 \times S^3$.
\end{ex}

The general solution to equation (\ref{eq:AAABBB}) is 
\begin{equation}
    D=c e^{\int^t_{1/2} \frac{1}{A_1(\xi)}d\xi } + e^{\int^t_{1/2} \frac{1}{A_1(\xi)}d\xi} \int_0^t A_1^3(\eta) e^{- \int^\eta_{1/2} \frac{1}{A_1(\xi)} d\xi} d\eta.
\end{equation}
As $\lim_{t \rightarrow 0} D(t) t^2=4c$, and we are only interested in positive solutions, we rename $c$ by $b_0^2/16$ so that the behaviour of $D$ for small $t$ agrees with (\ref{eq:Dinitial}).
Let $f_{A_1}$ be the function defined by 
$$
f_{A_1}:t \mapsto e^{\int^t_{1/2} \frac{1}{A_1(\xi)}d\xi },
$$ 
we conclude that $D=\frac{b_0^2}{16} f_{A_1}(t) + f_{A_1}(t) \int_0^t A_1^3(\eta) f^{-1}_{A_1}(\eta) d\eta$ and
\begin{equation*}
B_1=\sqrt{A_1^{-2}(t) \left( \dfrac{b_0^2}{16} f_{A_1}(t) + f_{A_1}(t) \int_0^t A_1^3(\eta) f^{-1}_{A_1}(\eta) d\eta \right) }.
\end{equation*}
For these solutions, there is an extra $\text{SU}(2)$ symmetry, and hence they are not only $\text{SU}(2)^2$-invariant but actually $\text{SU}(2)^3$-invariant.

If we consider $\R^4 \times S^3$ and assume a linear asymptotic behaviour of $A_1$, i.e.\ 
$$
A_1(t) \sim at,
$$
$a>0$, when $t \rightarrow \infty$, we have linear asymptotic behaviour of $B_1$ (as in the Bryant--Salamon metric, for which $a=1/3$) where $a > 1/4$ (but only in this case). 
Specifically,
$$
B_1 (t) \sim \sqrt{\dfrac{a^2}{4a-1}} t.
$$

\subsubsection{Case $A_1=A_2=A_3$.}\label{subsec:AAA}

We will now see what happens when $A_1=A_2=A_3$.
Writing $D= (D_1, D_2, D_3)^T$ the system of equations (\ref{eq:coclosed}) is
\[ 
\dot{D}= 
\dfrac{1}{A_1} 
\begin{pmatrix}
-1 & 1 & 1 \\
1 & -1 & 1 \\
1 & 1 & -1
\end{pmatrix}
D +
\begin{pmatrix}
1 \\
1 \\
1
\end{pmatrix}
A_1^3
.
\]

We will solve the system for a generic smooth $A_1(t)$, with $A_1(t) \neq 0$ if $t \neq 0$.
By first solving the homogeneous system and then using variation of parameters for the the general solution to the in-homogeneous system, we find that the general solution is
\[
\begin{array}{ll}
D
=&
\begin{pmatrix}
-f^{-2}_{A_1}(t) & -f^{-2}_{A_1}(t) & f_{A_1}(t) \\
0 & -f^{-2}_{A_1}(t) & f_{A_1}(t) \\
-f^{-2}_{A_1}(t) & 0 & f_{A_1}(t)
\end{pmatrix}
\begin{pmatrix}
c_1  \\
c_2  \\
c_3 
\end{pmatrix}
+
\begin{pmatrix}
f_{A_1}(t) \int^t_0 A_1^3(\eta) f^{-1}_{A_1}(\eta) d \eta \\
f_{A_1}(t) \int^t_0 A_1^3(\eta) f^{-1}_{A_1}(\eta) d \eta \\
f_{A_1}(t) \int^t_0 A_1^3(\eta) f^{-1}_{A_1}(\eta) d \eta
\end{pmatrix},
\end{array}
\]
for some real constants $c_1, c_2, c_3$.
We observe that although $1/A_1(t)$ might not be locally integrable in a neighborhood of 0, the exponential of the
integral $\pm \int^t_{1/2} A_1^{-1}(\xi) d\xi$ need not present a singularity at 0 (see examples below). Similarly for \\
$f_{A_1}(t) \int_0^t A_1^3(\eta) f^{-1}_{A_1}(\eta) d\eta$.

As motivated from the conditions from Lemma \ref{lemma8}, we now suppose a Taylor expansion of $A_1(t)$ of the following form
$$
A_1(t)= \dfrac{t}{2} +a_{1,3} t^3 + a_{1,5} t^5 + ...
$$
We have then
$$
\int^t_{1/2} \dfrac{1}{A_1(\xi)} d\xi = \int^t_{1/2} \dfrac{2}{\xi} d\xi + \int^t_{1/2} \dfrac{a_{1,3} \xi + a_{1,5} \xi^3 + ...}{\frac{1}{4} + \frac{a_{1,3}}{2} \xi^2 + \frac{a_{1,5}}{2} \xi^4 + ...} d\xi
=2 \ln t + f(t).
$$
The second term is the integral of a smooth function, and we have denoted it by $f(t)$.
Hence
$$
f_{A_1}(t)  = e^{f(t)} t^2, \quad
f^{-2}_{A_1}(t)  = e^{-2f(t)} t^{-4}.
$$
Imposing $D_i(0)=0$ so that conditions from Lemma \ref{lemma8} can be satisfied, we see that we must have $c_1 =c_2 =0$, so $B_1 =B_2 =B_3$ and we are in special case of \ref{section:AAABBB}. Renaming $c_3$ as $b_0^2/16$ as before, the general solution becomes
\[
\begin{array}{ll}
D=
\begin{pmatrix}
\frac{b_0^2}{16} f_{A_1}(t)  + f_{A_1}(t)  \int_0^t A_1^3(\eta) f^{-1}_{A_1}(\eta)  d\eta \\
\frac{b_0^2}{16} f_{A_1}(t)  + f_{A_1}(t)  \int_0^t A_1^3(\eta) f^{-1}_{A_1}(\eta)  d\eta \\
\frac{b_0^2}{16} f_{A_1}(t)  + f_{A_1}(t)  \int_0^t A_1^3(\eta) f^{-1}_{A_1}(\eta)  d\eta \\
\end{pmatrix}.
\end{array}
\]

\section{Class of coclosed $G_2$-structures over a $\text{SU}(2)^2$-invariant manifold}\label{sec:coclosedG2}

\subsection{Existence and uniqueness results}\label{sec:existence}

The main tool that we are going to use to get existence and uniqueness results for coclosed $G_2$-structures is the following theorem (\cite[Theorem 7.1]{Mal74} and \cite{FH17} for this statement).

\begin{theorem}\cite[Theorem 4.7]{FH17}\label{thm:4.7}
Consider the singular initial value problem
\begin{equation}\label{eq:4.8}
\dot{y} = \dfrac{1}{t} M_{-1}(y) + M (t, y),
\quad
y(0) = y_0,
\end{equation}
where $y$ takes values in $\R^k$, $M_{-1}: \R^k \rightarrow \R^k$ is a smooth function of $y$ in a neighbourhood of $y_0$ and $M : \R \times \R^k \rightarrow \R^k$ is smooth in $t, y$ in a neighbourhood of $(0, y_0)$. Assume that
\begin{enumerate}[label=(\roman*)]
    \item $M_{-1} (y_0) = 0$;
    \item $h \text{Id} - d_{y_0} M_{-1}$ is invertible for all $h \in \mathbb{N}$, $h \geq 1$.
\end{enumerate}
Then there exists a unique solution $y(t)$ of (\ref{eq:4.8}). Furthermore $y$ depends continuously on $y_0$ satisfying (i) and (ii).
\end{theorem}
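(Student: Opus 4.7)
The plan is to prove Theorem \ref{thm:4.7} in two main stages: first construct a unique formal power series solution using hypotheses (i) and (ii), then upgrade it to a genuine smooth solution via a contraction mapping argument in a suitable Banach space of functions vanishing to high order at the origin. Continuous dependence on $y_0$ will then follow from the continuity of the fixed point with respect to parameters.

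For the formal construction, I would multiply the equation by $t$ to get $t \dot y = M_{-1}(y) + t M(t,y)$, substitute the ansatz $y(t) = y_0 + \sum_{n \geq 1} a_n t^n$, and Taylor expand $M_{-1}$ and $M$ around $y_0$ and $(0, y_0)$ respectively. The constant term vanishes by hypothesis (i), and matching the coefficient of $t^h$ for each $h \geq 1$ yields a recursion
$$
(h \, \mathrm{Id} - L)\, a_h = P_h(a_1, \ldots, a_{h-1}),
$$
where $L = d_{y_0} M_{-1}$ and $P_h$ is a polynomial in the lower-order coefficients whose coefficients depend smoothly on the jets of $M_{-1}$ and $M$ at $(0,y_0)$. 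Hypothesis (ii) guarantees that $h\,\mathrm{Id} - L$ is invertible for every $h \geq 1$, so this recursion determines the $a_h$ uniquely, and $a_h$ depends continuously on $y_0$ throughout the open set where (i) and (ii) hold.

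To promote the formal solution to a genuine one, I would fix an integer $N$ much larger than $\max \mathrm{Re}(\mathrm{spec}\, L)$, take the polynomial truncation $y_N(t) = y_0 + \sum_{n=1}^N a_n t^n$, and write $y = y_N + z$, requiring $z = O(t^{N+1})$. By construction $y_N$ solves the equation modulo $O(t^N)$, and $z$ satisfies a perturbed Fuchsian system
$$
\dot z = \frac{1}{t}\bigl(L z + R_0(t,z)\bigr) + R_1(t,z),
$$
with $R_0, R_1$ smooth and $R_0(t,0), R_1(t,0) = O(t^N)$. Recasting this as an integral equation through the fundamental matrix $t^L$ of the homogeneous part and using that $h \,\mathrm{Id}-L$ is invertible for every positive integer $h$ to preclude resonances, the associated Volterra-type operator becomes a contraction on the Banach space of continuous $z : [0,\delta] \to \R^k$ with $\|z\|_* := \sup_{t \in (0,\delta]} t^{-(N+1)} |z(t)| < \infty$, for $\delta$ sufficiently small. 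The Banach fixed point theorem then produces a unique $z$; a standard bootstrap applied to $t\dot z = L z + (\text{smooth remainder})$ upgrades $z$, hence $y = y_N + z$, to a smooth function. Continuous dependence on $y_0$ follows because the recursion coefficients $a_h(y_0)$, the truncation $y_N$, and the contraction constants all vary continuously with $y_0$. The main obstacle is the contraction step: one must control the singular operator $z \mapsto \int_0^t s^{-1} L z(s)\,ds$ in the weighted norm $\|\cdot\|_*$ and verify that choosing $N$ large enough drives its operator norm below $1$, which is precisely where the full force of hypothesis (ii) — the non-resonance condition across \emph{all} positive integers $h$ — enters the argument.
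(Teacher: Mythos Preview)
The paper does not give its own proof of this statement: Theorem~\ref{thm:4.7} is quoted from \cite[Theorem 4.7]{FH17} (itself a reformulation of \cite[Theorem 7.1]{Mal74}) and is used as a black box to obtain Proposition~\ref{prop:exuniqsols}. There is therefore no in-paper argument to compare your proposal against.

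That said, your outline follows the classical strategy for regular singular (Fuchsian-type) initial value problems and is essentially the approach in Malgrange: build the formal Taylor series recursively using the invertibility of $h\,\mathrm{Id}-d_{y_0}M_{-1}$ at each order, truncate at a high order $N$ past the spectrum of $d_{y_0}M_{-1}$, and close up the remainder by a contraction in a weighted space. One small correction of emphasis: in the contraction step the ``full force'' of hypothesis (ii) across \emph{all} positive integers is not what drives the operator norm below $1$; rather, (ii) is what makes the formal recursion solvable at every order, while the contraction only needs $N$ chosen larger than the real parts of the eigenvalues of $L=d_{y_0}M_{-1}$ so that $t^{L}$ acting on $O(t^{N+1})$ terms is tame. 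With that caveat, the plan is sound.
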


Note that this result only gives a short-time solution. However, we will be able to further extend the solution (see Remark \ref{rmk:domain}).

We will write our system as in this theorem. 
Let $A_1, A_2, A_3:[0, L) \rightarrow \R$ be smooth functions, where $L$ is either infinity or $1$, that satisfy the corresponding conditions from Lemma \ref{lemma8} (i), i.e.\ with $A_i(t) >0$ for $t \in (0,L)$, and such that
\begin{enumerate}[label=(\roman*)]
    \item $A_i$'s are odd;
    \item $\dot{A}_i(0)=1/2$.
\end{enumerate}
Let $D_0=(0, 0, 0)^T$, as $D_i(0)$ must be 0 if we want an extension to a singular orbit in $t=0$. Then we can write (\ref{eq:coclosedDmatrix})
as 
$$
\dot{D}=\dfrac{1}{t} M_{-1}(D) +M(t,D),
$$
where
\[
M_{-1}=2
\begin{pmatrix}
-1 & 1 & 1\\
1 & -1 & 1\\
1 & 1 & -1
\end{pmatrix}: \R^3 \rightarrow \R^3,
\]
and
\[
M(t, D)= A_1 A_2 A_3
\begin{pmatrix}
1\\
1\\
1
\end{pmatrix}
+ \dfrac{1}{A_1 A_2 A_3}
\begin{pmatrix}
-A_1^2 & A_2^2 & A_3^2 \\
A_1^2 & -A_2^2 & A_3^2 \\
A_1^2 & A_2^2 & -A_3^2
\end{pmatrix}
D
-\dfrac{1}{t} M_{-1} (D)
\]
is smooth in $t, D$.
The condition (i) from Theorem \ref{thm:4.7} holds as $M_{-1}(D_0)=0$. However, (ii) does not hold as $d_{y_0} M_{-1}$ has one positive eigenvalue: $d_{y_0} M_{-1} - h Id$ is not invertible for $h=2$:
\[
d_{y_0} M_{-1}=2
\begin{pmatrix}
-1 & 1 & 1\\
1 & -1 & 1\\
1 & 1 & -1
\end{pmatrix}.
\]

Dividing (\ref{eq:coclosedDmatrix}) by $t^2$, we get
$$
\dfrac{\dot{D}}{t^2}= \dfrac{1}{t}M_{-1} \left( \dfrac{D}{t^2} \right) + M' \left( t, \dfrac{D}{t^2} \right),
$$
where $M_{-1}$ is as before and
\[
M'(t, y)= \dfrac{1}{t^2} A_1 A_2 A_3
\begin{pmatrix}
1\\
1\\
1
\end{pmatrix}
+ \dfrac{1}{A_1 A_2 A_3}
\begin{pmatrix}
-A_1^2 & A_2^2 & A_3^2 \\
A_1^2 & -A_2^2 & A_3^2 \\
A_1^2 & A_2^2 & -A_3^2
\end{pmatrix}
y
-\dfrac{1}{t} 
2
\begin{pmatrix}
-1 & 1 & 1\\
1 & -1 & 1\\
1 & 1 & -1
\end{pmatrix}
y
\]
The function $M'(t, y)$ is smooth in $(t,y)$ in a neighborhood of $(0, D_0)$. Then
\[
\begin{array}{ll}
\dfrac{d}{dt} \left( \dfrac{D}{t^2} \right)
&=\dfrac{1}{t} (M_{-1} -2Id) \left( \dfrac{D}{t^2} \right) + M' \left( t, \dfrac{D}{t^2} \right) \\
&=\dfrac{1}{t} M_{-1}' \left( \dfrac{D}{t^2} \right) + M' \left( t, \dfrac{D}{t^2} \right),
\end{array}
\]
where 
\[
M_{-1}'= 2
\begin{pmatrix}
-2 & 1 & 1\\
1 & -2 & 1\\
1 & 1 & -2
\end{pmatrix}
: \R^3 \rightarrow \R^3.
\]
Now $d_{y_0}M_{-1}'$ only has non-negative eigenvalues. 
Fix \\
$E_0= (b_0^2/4, b_0^2/4, b_0^2/4)^T$ for some $b_0 \neq 0$. Note that $M_{-1}'(E_0)=0$.
The new singular initial value problem for $E(t)= D(t)/t^2$ is:
\begin{equation}\label{eq:IVPE}
\dot{E}=\dfrac{1}{t} M_{-1}'(E) + M'(t,E), \quad E(0)=E_0.
\end{equation}
It satisfies the conditions from Theorem \ref{thm:4.7}, meaning there exists a unique solution $E(t)$ in a neighbourhood of 0. Furthermore it depends continuously on $E_0$. Then we can define $D_1, D_2, D_3$ by $D(t)=t^2 E(t)$. 

\begin{remark}\label{rmk:domain}
As for every $i=1,2,3$ we have that $A_i$ is smooth and positive on $(0,L)$, the function $(D, t) \mapsto M(t) D +N(t)$ (with $M$ and $N$ are as in (\ref{eq:MN})) is Lipschitz in $D$ on any closed interval contained in $(0,L)$. Hence, using Picard--Lindel\"of Theorem we know that we can extend the solution in a neighbourhood of 0 to $[0, L)$.
\end{remark}

We have proved the next Proposition.

\begin{prop}\label{prop:exuniqsols}
Let $A_1, A_2, A_3:[0, L) \rightarrow \R$ be smooth functions with $A_i(t) >0$ for $t \in (0,L)$, where $L$ is either infinity or $1$,
such that
\begin{enumerate}[label=(\roman*)]
    \item $A_i$'s are odd;
    \item $\dot{A}_i(0)=1/2$.
\end{enumerate}
Let $b_0 \neq 0$. Consider the singular initial value problem
$$
\begin{array}{ll}
\dot{D}_1 = A_1 A_2 A_3 - \dfrac{A_1^2 D_1}{A_1 A_2 A_3} + \dfrac{A_2^2 D_2}{A_1 A_2 A_3} + \dfrac{A_3^2 D_3}{A_1 A_2 A_3}, \\
\dot{D}_2 = A_1 A_2 A_3 + \dfrac{A_1^2 D_1}{A_1 A_2 A_3} - \dfrac{A_2^2 D_2}{A_1 A_2 A_3} + \dfrac{A_3^2 D_3}{A_1 A_2 A_3}, \\
\dot{D}_3 = A_1 A_2 A_3 + \dfrac{A_1^2 D_1}{A_1 A_2 A_3} + \dfrac{A_2^2 D_2}{A_1 A_2 A_3} - \dfrac{A_3^2 D_3}{A_1 A_2 A_3},
\end{array}
$$
with
$$
D_i(0)=\dot{D}_i(0)=0, \quad \ddot{D}_i(0)=b_0^2/4.
$$
Then, there exists a unique solution on $[0,L)$ for this system of ODEs, that depends continuously on $b_0$.
\end{prop}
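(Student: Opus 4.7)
The plan is to write the system as a singular initial value problem to which Theorem \ref{thm:4.7} applies. Directly setting $D_0=(0,0,0)^T$ and writing (\ref{eq:coclosedDmatrix}) as
$$
\dot{D}=\frac{1}{t}M_{-1}(D)+M(t,D),
$$
with $M_{-1}$ the constant matrix $2\begin{pmatrix}-1&1&1\\1&-1&1\\1&1&-1\end{pmatrix}$, would satisfy hypothesis (i) of Theorem \ref{thm:4.7}, but hypothesis (ii) fails because $d_{D_0}M_{-1}=M_{-1}$ has eigenvalue $2\in \mathbb{N}$. The trick, already set up in the discussion preceding the proposition, is to change the unknown to $E(t)\coloneqq D(t)/t^2$. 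A direct computation (using that $A_i(t)=t/2+O(t^3)$ so that $A_1A_2A_3/t^2$ is smooth at $0$) shows that $E$ satisfies the singular IVP (\ref{eq:IVPE}) with initial value $E_0=(b_0^2/4,b_0^2/4,b_0^2/4)^T$ and linearization at $E_0$ given by
$$
M'_{-1}=2\begin{pmatrix}-2&1&1\\1&-2&1\\1&1&-2\end{pmatrix}.
$$

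The key step is to verify that both hypotheses of Theorem \ref{thm:4.7} now hold for the new system. First, one checks $M'_{-1}(E_0)=0$, which follows directly from each row of $M'_{-1}$ summing to $0$. Second, the eigenvalues of $M'_{-1}$ are $0$ (with eigenvector $(1,1,1)^T$) and $-6$ (with multiplicity $2$), so $h\,\mathrm{Id}-M'_{-1}$ is invertible for every positive integer $h$. Theorem \ref{thm:4.7} therefore furnishes a unique smooth solution $E(t)$ of (\ref{eq:IVPE}) on some interval $[0,\delta)$, depending continuously on $E_0$, hence on $b_0$. Setting $D(t)\coloneqq t^2E(t)$ produces a solution of the original system whose Taylor expansion at $t=0$ automatically satisfies $D_i(0)=\dot D_i(0)=0$ and $\ddot D_i(0)=b_0^2/4$.

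Finally, to extend the solution from $[0,\delta)$ to the full interval $[0,L)$, I would use that for $t\in (0,L)$ the coefficient matrix $M(t)$ and the inhomogeneous term $N(t)$ from (\ref{eq:MN}) are smooth (since the $A_i$ are smooth and strictly positive on $(0,L)$), so the system $\dot D=M(t)D+N(t)$ is a regular \emph{linear} ODE on any compact subinterval of $(0,L)$. Picard--Lindel\"of then guarantees existence, uniqueness and no finite-time blow-up for the linear problem, so the solution obtained near $0$ extends uniquely to all of $[0,L)$. The continuous dependence on $b_0$ propagates along this extension by the standard continuous dependence theorem for linear ODEs.

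The main obstacle is the bookkeeping in the first step: verifying that $M'(t,y)$ as defined above really is smooth at $(0,E_0)$, which requires the expansion $A_i(t)=t/2+a_{i,3}t^3+\cdots$ from the boundary conditions (i)-(ii) to cancel the apparent singularities in $A_1A_2A_3/t^2$ and in $\frac{1}{t}(M_{-1}-2\mathrm{Id})y$. Once this smoothness is in hand, the eigenvalue computation for $M'_{-1}$ is immediate and Theorem \ref{thm:4.7} applies cleanly.
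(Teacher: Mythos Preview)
Your proposal is correct and follows essentially the same approach as the paper: the change of variable $E=D/t^2$ to shift the eigenvalues of the linearization down by $2$, the verification that $M'_{-1}$ has eigenvalues $0,-6,-6$ so Theorem~\ref{thm:4.7} applies at $E_0=(b_0^2/4,b_0^2/4,b_0^2/4)^T$, and the extension to $[0,L)$ via Picard--Lindel\"of on the resulting linear system are exactly the steps the paper carries out in the discussion before the proposition and in Remark~\ref{rmk:domain}. Your explicit eigenvalue computation and your remark on the smoothness of $M'(t,y)$ at $(0,E_0)$ are slightly more detailed than the paper's treatment, but there is no substantive difference in method.
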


We need to check whether we can recover $B_1, B_2, B_3$ using equation (\ref{eq:B_i}) to obtain a coclosed $G_2$-structure in the principal part. We can only do that if $D_1 D_2 D_3 >0$.
The following Lemma guarantees that this is true.

\begin{lemma}\label{lemma:signDs}
Let $D_1, D_2, D_3: [0, L) \rightarrow \R$ be the unique solutions from Proposition \ref{prop:exuniqsols}, for some $L$ which is either a positive number or infinity. Then $D_i(t) >0$ for $t \in (0,L)$.
\end{lemma}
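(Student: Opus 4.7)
The plan is a standard maximum-principle / first-exit-time argument exploiting the favorable sign structure of the inhomogeneous term in the system.

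First I would verify the desired positivity for $t$ small. From the initial conditions $D_i(0)=\dot D_i(0)=0$ and $\ddot D_i(0)=b_0^2/4$ with $b_0\neq 0$, together with smoothness of the solution on $[0,L)$, Taylor expansion gives $D_i(t)=\tfrac{b_0^2}{8}t^2+O(t^3)>0$ for all sufficiently small $t>0$ and all $i=1,2,3$. Hence the set $U=\{s\in(0,L):D_i(t)>0\text{ for }i=1,2,3\text{ and }t\in(0,s)\}$ is non-empty.

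Set $t^{*}=\sup U\in(0,L]$. The goal is to show $t^{*}=L$. Suppose instead that $t^{*}<L$. By continuity of each $D_i$ we have $D_i(t^{*})\ge 0$ for all $i$, and by definition of $t^{*}$ at least one of the $D_i(t^{*})$ vanishes; without loss of generality, $D_1(t^{*})=0$ (the other cases are symmetric). Evaluating the first equation of the system at $t^{*}$ gives
\begin{equation*}
\dot D_1(t^{*})=A_1A_2A_3(t^{*})+\frac{A_2^2(t^{*})\,D_2(t^{*})}{A_1A_2A_3(t^{*})}+\frac{A_3^2(t^{*})\,D_3(t^{*})}{A_1A_2A_3(t^{*})}.
\end{equation*}
Since $t^{*}\in(0,L)$, the hypothesis $A_i(t)>0$ on $(0,L)$ gives $A_1A_2A_3(t^{*})>0$, and we just noted $D_2(t^{*}),D_3(t^{*})\ge 0$, so $\dot D_1(t^{*})>0$.

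On the other hand, $D_1(t)>0$ for $t\in(0,t^{*})$ and $D_1(t^{*})=0$, so necessarily $\dot D_1(t^{*})\le 0$, contradicting the strict inequality above. Hence $t^{*}=L$, which means $D_i(t)>0$ on $(0,L)$ for every $i=1,2,3$, as required.

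The only subtlety, and therefore the main point to be careful about, is that the negative coefficient $-A_i^2/(A_1A_2A_3)$ appears in $\dot D_i$ in exactly the $i$-th slot, so at the candidate first-zero time the potentially negative contribution is multiplied by $D_i(t^{*})=0$ and drops out; this is what makes the bound $\dot D_1(t^{*})>0$ work and is the content that saves the argument. Since this feature of the system is symmetric in $i=1,2,3$, the case distinction in the barrier step is genuinely symmetric and no additional work is needed.
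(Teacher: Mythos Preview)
Your proof is correct and follows essentially the same first-exit-time argument as the paper: positivity near $0$ from the Taylor expansion, then at the first zero $t'$ the vanishing $D_i$-term drops out of $\dot D_i(t')$, leaving a strictly positive expression that contradicts $\dot D_i(t')\le 0$. The only discrepancy is the coefficient $b_0^2/8$ versus the paper's $b_0^2/4$ in the quadratic term, which stems from an inconsistency between equation~(\ref{eq:Dinitial}) and the condition $\ddot D_i(0)=b_0^2/4$ stated in Proposition~\ref{prop:exuniqsols}; in either case the coefficient is positive and the argument goes through unchanged.
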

\begin{proof}
By construction $D_i(t) = \frac{b_0^2}{4} t^2 +O(t^3)$, so in a neighborhood around $0$ they are all positive. Suppose the statement is not true, and let $t'>0$ be the smallest positive real number where one of the $D_i$'s is $0$.
Then
$$
\dot{D}_i(t')=A_1(t')A_2(t')A_3(t')+\dfrac{A^2_j(t')D_j(t')}{A_1(t')A_2(t')A_3(t')}
+\dfrac{A^2_k(t')D_k(t')}{A_1(t')A_2(t')A_3(t')} > 0
$$
where $\{ i,j,k\}$ is a cyclic permutation of $\{ 1,2,3\}$. However, $D_i$ cannot be increasing at $t'$, so this is a contradiction.
\end{proof}

\begin{cor}\label{cor:exuniqsols}
Let $A_1, A_2, A_3:[0, L) \rightarrow \R$ be smooth functions with $A_i(t) >0$ for $t \in (0,L)$, where $L$ is either infinity or $1$, such that
\begin{enumerate}[label=(\roman*)]
    \item $A_i$'s are odd;
    \item $\dot{A}_i(0)=1/2$.
\end{enumerate}
Let $b_0 \neq 0$. Then there exist unique functions $B_1, B_2, B_3:[0,L) \rightarrow \R$ that, together with $A_1, A_2, A_3$, will give a solution for the system of ODEs (\ref{eq:coclosedAB}) with $B_i(0)=b_0$, and the $B_i$'s depend continuously on $b_0$.
\end{cor}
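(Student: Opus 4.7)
The plan is to derive the $B_i$'s directly from the $D_i$'s produced by Proposition \ref{prop:exuniqsols} and verify that the resulting sextuple $(A_i,B_i)$ solves the original system (\ref{eq:coclosedAB}). Given $A_1,A_2,A_3$ and $b_0$, first invoke Proposition \ref{prop:exuniqsols} to obtain the unique $D_1,D_2,D_3\colon[0,L)\to\R$ with $\ddot{D}_i(0)=b_0^2/4$, and invoke Lemma \ref{lemma:signDs} to guarantee $D_i(t)>0$ for $t\in(0,L)$. Then define
\[
B_i(t)=\mathrm{sign}(b_0)\sqrt{\dfrac{D_j(t)D_k(t)}{D_i(t)\,A_i^2(t)}}\quad(t>0),\qquad B_i(0)=b_0,
\]
where $(i,j,k)$ is a cyclic permutation of $(1,2,3)$, exactly as in (\ref{eq:B_i}). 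Positivity of the $D_i$'s on $(0,L)$ and of $A_i^2$ makes the square root well defined there.

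Next I would check continuity at $t=0$. From $A_i(t)=t/2+O(t^3)$ and $D_i(t)=\tfrac{b_0^2}{4}t^2+O(t^3)$, the ratio $D_jD_k/(D_iA_i^2)$ tends to $(b_0^2/4)^2/((b_0^2/4)(1/4))=b_0^2$ as $t\to 0$, so $B_i(t)\to\mathrm{sign}(b_0)|b_0|=b_0$. Hence the formula is consistent with the initial value $B_i(0)=b_0$. Once the $B_i$'s are in hand, one recovers the identities $D_1=A_2B_2A_3B_3$, $D_2=A_1B_1A_3B_3$, $D_3=A_1B_1A_2B_2$ by direct algebraic manipulation of the defining formula; substituting these identities back into the system (\ref{eq:coclosedD}), which the $D_i$'s satisfy by construction, recovers (\ref{eq:coclosedAB}). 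Thus $(A_i,B_i)$ solves the coclosed system on $[0,L)$.

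For uniqueness, suppose $(B_1,B_2,B_3)$ is any other triple on $[0,L)$ that together with $(A_1,A_2,A_3)$ solves (\ref{eq:coclosedAB}) with $B_i(0)=b_0$. Setting $\tilde D_1=A_2B_2A_3B_3$, $\tilde D_2=A_1B_1A_3B_3$, $\tilde D_3=A_1B_1A_2B_2$ gives a triple that satisfies (\ref{eq:coclosedD}), and the initial conditions $B_i(0)=b_0$, $A_i(0)=0$, $\dot A_i(0)=1/2$ force $\tilde D_i(0)=\dot{\tilde D}_i(0)=0$ and $\ddot{\tilde D}_i(0)=b_0^2/4$. By the uniqueness clause of Proposition \ref{prop:exuniqsols}, $\tilde D_i=D_i$, and since the sign of $B_i$ at $t=0$ equals the sign of $b_0$ and $B_i$ is continuous and nonvanishing on $(0,L)$, the formula forces $B_i$ to coincide with the expression above. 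Continuous dependence on $b_0$ follows from the continuous dependence of $D_i$ on $b_0$ asserted in Proposition \ref{prop:exuniqsols}, together with continuity of the map $(D_1,D_2,D_3)\mapsto B_i$ on the open set where the $D_i$'s are positive, plus the uniform limit $B_i(t)\to b_0$ as $t\to 0$.

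The main obstacle I anticipate is handling the apparent singularity at $t=0$: both the numerator $D_jD_k$ and the denominator $D_iA_i^2$ vanish to order four there, so one must use the precise leading Taylor coefficients $D_i\sim(b_0^2/4)t^2$ and $A_i\sim t/2$ provided by Proposition \ref{prop:exuniqsols} and Lemma \ref{lemma8}(i) to identify the limit, and to justify that $B_i$ matches continuously with the prescribed value $b_0$. Everything else reduces to the algebraic bijection between the $(B_i)$ variables and the $(D_i)$ variables, together with the existence/uniqueness already established for the $D_i$-system.
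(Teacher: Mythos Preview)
Your proposal is correct and follows the same route as the paper. The paper in fact gives no explicit proof of this corollary: it is stated immediately after Proposition \ref{prop:exuniqsols} and Lemma \ref{lemma:signDs}, with the preceding remark that one can recover the $B_i$'s from (\ref{eq:B_i}) provided $D_1D_2D_3>0$; your argument simply spells out this passage from $D_i$ to $B_i$ (and back, for uniqueness) in detail.
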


We could also study the system (\ref{eq:coclosedD}) without appealing to the theory of singular initial value problems. We may do so using the same procedure of Section \ref{subsec:AAA}, by first solving it using the general theory of linear systems of ordinary differential equations and then introducing the boundary conditions of the functions $A_1, A_2, A_3$.
We describe the main points but omit the details, as it follows the same steps as in Section \ref{subsec:AAA}.
If $\lambda_1(t)$, $\lambda_2(t)$, $\lambda_3(t)$ are the eigenvalues of $M(t)$, then 
the general solution will be equal to a particular solution plus a linear combination of $c_1 \exp{\int_{1/2}^t \lambda_1(\xi)}$, $c_2 \exp{\int_{1/2}^t \lambda_2(\xi)}$, and $c_3 \exp{\int_{1/2}^t \lambda_3(\xi)}$, where $c_1, c_2, c_3$ are arbitrary constants. Near the singular orbit and without loss of generality, we have $\lambda_1 = -4/t + O(t)$, $\lambda_2 = -4/t + O(t)$, and $\lambda_3 = 2/t + O(t)$, so imposing $D_i(0)=0$ we get $c_1=c_2=0$. Finally, $c_3$ will be uniquely determined by $b_0^2$ and the desired boundary conditions for the solution (\ref{eq:Dinitial}) also follow.
While this alternative method requires long computations that the solution above avoided, we note that this can be used to get an explicit solution to (\ref{eq:coclosedD}), which will therefore lead to explicit expressions of the $G_2$-structure and the metric.

\subsection{Extension on $\R^4 \times S^3$}\label{sec:extnoncomp}

In this section we consider the seven-dimensional non-compact simply connected cohomogeneity one manifold $M=\R^4 \times S^3$ with group diagram $\text{SU}(2)^2 \supset \Delta \text{SU}(2) \supset \{ 1 \}$. From the last section, we know that on the principal part of that manifold there is a family of coclosed $G_2$-structures. 
They are  constructed from a half-flat $\text{SU}(3)$-structure which is invariant under the cohomogeneity one action.
The next step is checking whether it is possible to extend this structure to the singular orbit.

The functions $A_i$ that, together with the constant $b_0 \neq 0$, give us our family of $G_2$-structures, satisfy the conditions (i) from Lemma \ref{lemma8} by construction.
Recall the expression of the $B_i$'s:
$$
B_i(t) = \text{sign}(b_0) \sqrt{\dfrac{D_j D_k}{D_i A_i^2}}, t \in (0,L), \quad B_i(0)=b_0,
$$
where $\{ i,j,k\}$ is a cyclic permutation of $\{ 1,2,3\}$.
Note that $B_i(t)$ is continuous at $t=0$.
We will check whether they satisfy the conditions (ii) from Lemma \ref{lemma8} for the metric to be extended to a singular orbit $Q = \text{SU}(2)^2/ \Delta \text{SU}(2) \cong S^3$.
First, a direct consequence from Lemma \ref{lemma:signDs} is that $B_i$'s are sign definite for $t>0$.
The next two Lemmas guarantee that, under the same boundary hypothesis for $A_1, A_2, A_3$, the remaining conditions from Lemma \ref{lemma8} hold.

\begin{lemma}\label{lemma:parity}
The functions $B_1, B_2, B_3$ from Corollary \ref{cor:exuniqsols} are even.
\end{lemma}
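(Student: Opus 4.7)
The plan is to deduce evenness of each $B_i$ from evenness of the $D_i$'s. Indeed, formula (\ref{eq:B_i}) gives $B_i = \text{sign}(b_0)\sqrt{D_j D_k / (D_i A_i^2)}$, and since each $A_i$ is odd, $A_i^2$ is even. Thus, once every $D_i$ is shown to be even, the expression under the square root is a quotient of even functions on $(0, L)$, forcing $B_i$ to be even there; the value $B_i(0) = b_0$ then extends evenness to all of $[0, L)$.

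The main step is therefore to show that each $D_i$ is even, which I would do by a symmetry argument based on the uniqueness in Proposition \ref{prop:exuniqsols}. Since each $A_i$ is smooth and odd on $[0,L)$ (the odd-function hypothesis from Lemma \ref{lemma8}), it extends smoothly and oddly to $(-L,L)$, and the odd extension is automatically smooth because all even-order Taylor coefficients of $A_i$ at $0$ vanish. Define $\tilde D_i(t) := D_i(-t)$. Using $\dot{\tilde D}_i(t) = -\dot D_i(-t)$ together with the identities $A_1(-t)A_2(-t)A_3(-t) = -A_1(t)A_2(t)A_3(t)$ and $A_j^2(-t) = A_j^2(t)$, a direct check shows that the two sign flips cancel in every summand on the right-hand side of (\ref{eq:coclosedD}); hence $\tilde D_i$ solves the same system. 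The initial data also match at $t=0$: $\tilde D_i(0) = 0$, $\dot{\tilde D}_i(0) = -\dot D_i(0) = 0$, and $\ddot{\tilde D}_i(0) = \ddot D_i(0) = b_0^2/4$. By the uniqueness statement in Proposition \ref{prop:exuniqsols}, $\tilde D_i = D_i$, so each $D_i$ is even.

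The only technical point worth stressing — and the main obstacle to making the argument rigorous — is the invariance of the singular system (\ref{eq:coclosedD}) under the reflection $t \mapsto -t$ with the $D_i$'s held fixed. This invariance rests entirely on the oddness of the $A_i$'s: the triple product $A_1 A_2 A_3$ is odd, while each $A_j^2$ is even, so every term on the right-hand side of (\ref{eq:coclosedD}) flips sign under $t \mapsto -t$, precisely matching the sign produced by differentiating $D_i(-t)$. Once this reflection symmetry is in hand, the result is an instance of the standard principle that a singular initial value problem whose vector field and initial data are invariant under an involution admits an invariant solution.
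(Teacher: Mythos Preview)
Your proposal is correct and follows essentially the same approach as the paper: both argue that the $D_i$'s are even by exploiting the oddness of the coefficient functions in the linear system $\dot D = M(t)D + N(t)$ (equivalently, the oddness of the $A_i$'s), and then deduce evenness of the $B_i$'s from formula~(\ref{eq:B_i}). The paper phrases the reflection step as defining the even extension $D(-t) \coloneqq D(t)$ and checking it solves the oddly extended system, whereas you phrase it as showing $\tilde D(t) \coloneqq D(-t)$ satisfies the same singular IVP and invoking the uniqueness in Proposition~\ref{prop:exuniqsols}; these are equivalent formulations of the same symmetry argument.
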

\begin{proof}
Recall that we can write (\ref{eq:coclosedD}) in matrix form as $dD/dt=M(t)D+N(t)$, where $D=(D_1, D_2, D_3)^T$ and both $M, N$ are matrices of odd functions. Let $D(t)$ be the unique solution of this equation in a neighbourhood of 0, say $[0, \varepsilon)$. 
We extend $M$ and $N$ to $(-\varepsilon, 0)$ by setting $M(-t) \coloneqq -M(t)$ and $N(-t) \coloneqq -N(t)$, $t \in (0, \varepsilon)$.
Then, since $D(t)$ is a solution of the equation 
$
dy(t)/d(t)=M(t)y(t)+N(t),
$
which is equivalent to 
$
dy(t)/d(-t)=M(-t)y(t)+N(-t),
$
we can therefore set $D(-t) \coloneqq D(t)$, which makes $D$ into an even function.
Finally, as the $D_i$'s are even, the $B_i$'s are even too.
\end{proof}

\begin{lemma}\label{lemma:dotdotB0}
The functions $B_1, B_2, B_3$ from Corollary \ref{cor:exuniqsols} satisfy $\ddot{B}_1(0)=\ddot{B}_2(0)=\ddot{B}_3(0)$.
\end{lemma}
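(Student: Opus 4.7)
The plan is to Taylor-expand $B_i^2 = D_j D_k/(D_i A_i^2)$ around $t=0$, extract $\ddot{B}_i(0)$ from the $t^2$-coefficient, and then use the ODE system (\ref{eq:coclosedD}) to constrain the relevant Taylor coefficients of the $D_i$.

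First, I would observe that each $B_i$ is smooth near $t=0$: both $D_jD_k$ and $D_iA_i^2$ have a common zero of order exactly $t^4$ with nonvanishing leading coefficient, so the quotient $B_i^2$ is smooth and positive at $0$, and hence so is $B_i$. By Lemma \ref{lemma:parity} and (\ref{eq:B_i}), $B_i$ is even with $B_i(0)=b_0$, so $B_i(t)=b_0+\tfrac12 \ddot{B}_i(0)\,t^2 + O(t^4)$, and it suffices to show that the $t^2$-coefficient of $B_i^2$ is the same for all $i$. By hypothesis (i)–(ii) on the $A_i$ and the asymptotic (\ref{eq:Dinitial}) (combined with evenness of the $D_i$), we may write
\[ A_i(t) = \tfrac{t}{2} + a_{i,3}\,t^3 + O(t^5), \qquad D_i(t) = \tfrac{b_0^2}{4}\,t^2 + d_{i,4}\,t^4 + O(t^6). \]
Substituting these expansions into $B_i^2 = D_jD_k/(D_iA_i^2)$ (for $\{i,j,k\}$ a cyclic permutation of $\{1,2,3\}$) and simplifying should yield
\[ B_i^2(t) = b_0^2 + 4\bigl(d_{j,4}+d_{k,4}-d_{i,4}-b_0^2 a_{i,3}\bigr)\,t^2 + O(t^4), \]
so that $b_0\,\ddot{B}_i(0) = 4\bigl(d_{j,4}+d_{k,4}-d_{i,4}-b_0^2 a_{i,3}\bigr)$.

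The second step is to determine the $d_{i,4}$ well enough to conclude. Plugging the Taylor expansions of $A_i$ and $D_i$ into the ODE (\ref{eq:coclosedD}) and matching $t^3$-coefficients gives a $3\times 3$ linear system for the $d_{i,4}$ with inhomogeneous term involving the $a_{i,3}$. Taking pairwise differences of these equations cancels the symmetric $A_1A_2A_3$ contribution and should yield the clean identity
\[ d_{i,4}-d_{j,4} = \tfrac{b_0^2}{2}(a_{j,3}-a_{i,3}), \qquad i\neq j. \]
Substituting this identity into the formula for $\ddot{B}_i(0)-\ddot{B}_j(0)$ makes the $d$- and $a$-contributions cancel exactly, giving $\ddot{B}_1(0)=\ddot{B}_2(0)=\ddot{B}_3(0)$.

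The step I expect to require the most care is matching the $t^3$-coefficients in the ODE, because the off-diagonal coefficients $A_j^2/(A_1A_2A_3)$ have a $t^{-1}$ singularity at $0$ and so must be expanded through order $t$ to extract the $a_{i,3}$-dependence. The simplification that rescues the computation is structural: the leading singular $t^{-1}$ part of the matrix in (\ref{eq:coclosedDmatrix}) acts on the $D_i$'s in a way that is symmetric under permutations of indices (it only picks up the symmetric combinations), so it drops out of the differences $\dot{D}_i-\dot{D}_j$, and only the subleading correction terms carrying $a_{i,3}-a_{j,3}$ survive, producing the needed relation among the $d_{i,4}$'s.
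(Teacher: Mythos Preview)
Your proposal is correct and follows essentially the same route as the paper: both proofs Taylor-expand $A_i$, $D_i$ around $t=0$, use the ODE (\ref{eq:coclosedD}) to relate the $t^4$-coefficients $d_{i,4}$ to the $a_{i,3}$, and then translate this into the statement about $\ddot{B}_i(0)$. The only cosmetic differences are that the paper multiplies through by $A_1A_2A_3$ before matching coefficients and computes $d_{i,4}=\tfrac{1}{16}-\tfrac{1}{2}b_0^2 a_{i,3}$ explicitly (hence also the common value $b_{i,2}=\tfrac{1}{8b_0}-b_0(a_{1,3}+a_{2,3}+a_{3,3})$), whereas you work with the quotient $B_i^2=D_jD_k/(D_iA_i^2)$ and only extract the differences $d_{i,4}-d_{j,4}$; both lead to the same conclusion.
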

\begin{proof}
We denote by $a_{i,3}$ the coefficient accompanying $t^3$ in the Taylor expansion of $A_i(t)$, $b_{i,2}$ the coefficient accompanying $t^2$ in the Taylor expansion of $B_i(t)$ and $d_{i,4}$ the parameter accompanying $t^4$ in the Taylor expansion of $D_i(t)$. We can then write:
$$
\begin{array}{ll}
A_i(t)=\dfrac{t}{2}+ a_{i,3}t^3 + O(t^5), \\
B_i(t)=b_0 + b_{i,2} t^2 + O(t^4), \\
D_i(t)= \dfrac{b_0^2}{4} t^2 +d_{i,4}t^4 + O(t^6).
\end{array}
$$
The parameters $d_{i,4}$ depend on $a_{i,3}$, and we can compute this dependence from the ODEs for $D(t)$ by considering the Taylor expansion of both sides of
$$
A_1 A_2 A_3 \dot{D}_i =(A_1 A_2 A_3)^2 -A_i^2 D_i + A_j^2 D_j + A_k^2 D_k,
$$
where $\{ i,j,k\}$ is a cyclic permutation of $\{ 1,2,3\}$.
After a direct computation we get that
\begin{equation*}
d_{i,4}=\dfrac{1}{16}-\dfrac{1}{2}b_0^2 a_{i,3}.
\end{equation*}
Now, as
$$
D_i(t) =A_j(t) B_j(t) A_k(t) B_k(t),
$$
we can also write
$$
d_{i,4}=\dfrac{b_0^2}{2} (a_{j,3}+a_{k,3}) + \dfrac{b_0}{4}(b_{j,2}+b_{k,2}).
$$
Putting these two together, we get
$$
b_{1,2}=b_{2,2}=b_{3,2}=\dfrac{1}{8b_0} -b_0(a_{1,3}+a_{2,3}+a_{3,3}).
$$
Hence, the condition $\ddot{B}_1(0)=\ddot{B}_2(0)=\ddot{B}_3(0)$ is always true.
\end{proof}

We can now write the following Proposition.

\begin{prop}\label{prop:coclosednoncompact}
Let $M=\R^4 \times S^3$ be the seven-dimensional non-compact simply connected cohomogeneity one manifold with group diagram $\text{SU}(2)^2 \supset \Delta \text{SU}(2) \supset \{ 1 \}$, with a $G_2$-structure constructed from a half-flat $\text{SU}(3)$-structure which is invariant under the cohomogeneity one action.
Let the $\text{SU}(3)$-structure $(\omega, \Omega_1, \Omega_2)$ be written as in (\ref{eq:SU(3)structure}).
Let $A_1, A_2, A_3:[0, \infty) \rightarrow \R$ be smooth functions with $A_i(t) >0$ for $t \in (0,\infty)$ such that
\begin{enumerate}[label=(\roman*)]
    \item $A_i$'s are odd;
    \item $\dot{A}_i(0)=1/2$.
\end{enumerate}
Let $b_0 \neq 0$. Then there exist unique functions $B_1, B_2, B_3: [0,\infty) \rightarrow \R$ that, together with $A_1, A_2, A_3$, will give a solution to (\ref{eq:coclosedAB}) with $B_i(0)=b_0$, and the $B_i$'s depend continuously on $b_0 \neq 0$.
The metric $g$ given by (\ref{eq:g}) extends smoothly over the singular orbit.
\end{prop}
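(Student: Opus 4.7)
The plan is to assemble the results of Section \ref{sec:existence} and then appeal to Lemma \ref{lemma8}. Existence and uniqueness of functions $B_1, B_2, B_3$ on $[0, \infty)$ satisfying the coclosed system \eqref{eq:coclosedAB} together with $B_i(0) = b_0$, and depending continuously on $b_0$, is immediate from Corollary \ref{cor:exuniqsols} applied with $L = \infty$. The substantive remaining content is the smooth extension of the metric $g$ over the singular orbit $Q \cong S^3$, which will follow from Lemma \ref{lemma8} once its hypotheses are verified.

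Before invoking Lemma \ref{lemma8}, I would check that the $B_i$'s are smooth (not merely continuous) at $t=0$. Factor $D_i(t) = t^2 E_i(t)$ with $E_i$ smooth and positive on $[0, \infty)$, as constructed in the proof of Proposition \ref{prop:exuniqsols} via the regularised problem \eqref{eq:IVPE}, and write $A_i(t) = t\,\tilde{A}_i(t)$ with $\tilde{A}_i$ smooth and $\tilde{A}_i(0) = 1/2$. The defining relation \eqref{eq:B_i} then simplifies to
\[
B_i \;=\; \text{sign}(b_0)\sqrt{\frac{E_j E_k}{E_i\, \tilde{A}_i^{\,2}}},
\]
which is smooth and positive on all of $[0, \infty)$, with value at the origin fixed at $b_0$ by Corollary \ref{cor:exuniqsols}.

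With smoothness in hand, the hypotheses of Lemma \ref{lemma8} are easy to verify. Condition (i), that the $A_i$'s are odd with $\dot{A}_i(0) = 1/2$, is the standing hypothesis. For condition (ii): $B_i(0) = b_0 \neq 0$ with equal values by construction; evenness of the $B_i$'s is Lemma \ref{lemma:parity}; and the equality $\ddot{B}_1(0) = \ddot{B}_2(0) = \ddot{B}_3(0)$ is Lemma \ref{lemma:dotdotB0}. Lemma \ref{lemma8} then yields that $g$ extends smoothly as a Riemannian metric over the singular orbit, completing the proof.

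The bulk of the work is already carried out in the preliminaries of Section \ref{sec:existence} (the existence/uniqueness theorem together with the parity and second-derivative lemmas); the only point that is not immediate from those statements is the smoothness of the $B_i$'s at $t = 0$, which the factorisation above handles cleanly. Consequently, I do not expect a serious obstacle in this final assembly.
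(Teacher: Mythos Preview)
Your proof is correct and follows essentially the same route as the paper: invoke Corollary \ref{cor:exuniqsols} for existence/uniqueness, then verify the hypotheses of Lemma \ref{lemma8} via Lemmas \ref{lemma:parity} and \ref{lemma:dotdotB0}. Your explicit factorisation $D_i = t^2 E_i$, $A_i = t\tilde A_i$ to justify smoothness of the $B_i$'s at $t=0$ is a welcome detail the paper merely asserts; note only that ``positive'' should read ``sign-definite'' (with the sign of $b_0$), which is what Lemma \ref{lemma8} actually requires.
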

\begin{proof}
By Corollary \ref{cor:exuniqsols}, there exists unique functions $B_1, B_2, B_3: [0,\infty) \rightarrow \R$ that, together with $A_1, A_2, A_3$, will give a solution to (\ref{eq:coclosedAB}) with $B_i(0)=b_0$, and the $B_i$'s depend continuously on $b_0 \neq 0$.
Note that the $B_i$'s are smooth functions in $[0,\infty)$, and $B_1(0)=B_2(0)=B_3(0) \neq 0$.
By Lemma \ref{lemma:parity}, the functions $B_i$ are even, and by Lemma \ref{lemma:dotdotB0}, $\ddot{B}_1(0)=\ddot{B}_2(0)=\ddot{B}_3(0)$. They are also sign definite. Hence, by Lemma \ref{lemma8}, as the conditions for the functions $A_i$ are satisfied by construction, the metric $g$ extends smoothly over the singular orbit.
\end{proof}

The next Theorem summarizes the results from the previous Proposition.

\begin{theorem}
On the cohomogeneity one manifold $M=\R^4 \times S^3$ with group diagram $\text{SU}(2)^2 \supset \Delta \text{SU}(2) \supset \{ 1 \}$, there is a family of $\text{SU}(2)^2$-invariant coclosed $G_2$-structures which is given by three positive smooth functions $A_1, A_2, A_3: [0, \infty) \rightarrow \R$
satisfying the boundary conditions at $t=0$
$$
A_i(t)= \dfrac{t}{2} + O(t^3),
$$
and a non-zero parameter.
Moreover, any $\text{SU}(2)^2$-invariant coclosed $G_2$-structure constructed from a half flat $\text{SU}(3)$-structure is in this family.
\end{theorem}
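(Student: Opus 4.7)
The plan is to assemble the statement from results already established in the paper, with essentially no new technical content: the existence half is a direct translation of Proposition \ref{prop:coclosednoncompact}, and the uniqueness half (``any such $G_2$-structure is in this family'') follows by combining the classification of invariant half-flat $\mathrm{SU}(3)$-structures from Section \ref{sec:coh1g2} with the smoothness conditions at the singular orbit from Lemma \ref{lemma8} and the ODE uniqueness from Corollary \ref{cor:exuniqsols}.

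For the existence direction, I would start with arbitrary smooth positive functions $A_1,A_2,A_3\colon[0,\infty)\to\mathbb{R}$ whose Taylor expansions at $0$ begin with $t/2$; equivalently (by oddness) $A_i(t) = t/2 + O(t^3)$, so hypothesis (i) of Lemma \ref{lemma8} is satisfied by construction. Pick any nonzero $b_0$. Proposition \ref{prop:coclosednoncompact} then produces unique smooth $B_1,B_2,B_3\colon[0,\infty)\to\mathbb{R}$ solving the coclosed system (\ref{eq:coclosedAB}) with $B_i(0)=b_0$, and guarantees that the associated metric $g$ from (\ref{eq:g}) extends smoothly across the singular orbit $Q\cong S^3$. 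Plugging $(A_i,B_i)$ into (\ref{eq:SU(3)structure}) and (\ref{eq:G2 structure}) produces a $\mathrm{SU}(2)^2$-invariant, coclosed $G_2$-structure $\varphi$ defined on all of $M=\mathbb{R}^4\times S^3$, smoothly parametrized by $(A_1,A_2,A_3,b_0)$.

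For the uniqueness statement, let $\varphi$ be any $\mathrm{SU}(2)^2$-invariant coclosed $G_2$-structure on $M$ coming from a half-flat $\mathrm{SU}(3)$-structure on the principal orbits. By the discussion of Section \ref{sec:coh1g2}, its restriction to the principal part $(0,\infty)\times G/K$ is given in the standard form (\ref{eq:SU(3)structure})--(\ref{eq:G2 structure}) for some smooth sign-definite functions $A_i,B_i$ on $(0,\infty)$, with the coclosed condition recorded as the ODE system (\ref{eq:coclosedAB}). Because $\varphi$ extends smoothly across the singular orbit, Lemma \ref{lemma8} forces the $A_i$ to be odd with $\dot A_i(0)=1/2$ and the $B_i$ to be even with common nonzero value $b_0:=B_1(0)=B_2(0)=B_3(0)$ and common second derivative at $0$; in particular the triple $(A_1,A_2,A_3)$ and the parameter $b_0$ lie in the class of the theorem. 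Corollary \ref{cor:exuniqsols} (applied with $L=\infty$) then says that $(A_1,A_2,A_3,b_0)$ \emph{determines} the $B_i$'s uniquely, so $\varphi$ coincides with the member of the family already constructed from $(A_1,A_2,A_3,b_0)$.

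Since both directions invoke Proposition \ref{prop:coclosednoncompact} and Corollary \ref{cor:exuniqsols}, the only potential difficulty is purely bookkeeping: checking that the boundary conditions in Lemma \ref{lemma8} used at $t=0$ are exactly the freedoms recorded in the theorem statement, and observing that the coclosed condition plus smooth extension across the singular orbit leaves no freedom in the $B_i$'s beyond $b_0$. All genuine analytic work, in particular the treatment of the singular initial value problem via the substitution $E=D/t^2$ that made Theorem \ref{thm:4.7} applicable, has already been carried out, so no additional obstacle is expected at this stage.
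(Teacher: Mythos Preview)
Your proposal is correct and matches the paper's own treatment: the paper does not give an independent proof of this theorem but explicitly presents it as a summary of Proposition \ref{prop:coclosednoncompact}, with the ``moreover'' clause implicit in the uniqueness assertions of Corollary \ref{cor:exuniqsols} and the classification in Section \ref{sec:coh1g2}. Your write-up actually spells out the uniqueness direction more carefully than the paper does.
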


\begin{remark}
The volume of the singular orbit at $t=0$ is proportional to $b_0^3$.
\end{remark}

\subsection{Extension on $S^4 \times S^3$}\label{sec:extcomp}

In this section we consider the seven-dimensional compact simply connected cohomogeneity one manifold $M=S^4 \times S^3$ with group diagram $\text{SU}(2)^2 \supset \Delta \text{SU}(2), \Delta \text{SU}(2) \supset \{ 1 \}$.
From Section \ref{sec:existence}, we know that on the principal part of that manifold there is a family of coclosed $G_2$-structures constructed from a half-flat $\text{SU}(3)$-structure. In the last section, we saw that it is possible to extend the structure to one singular orbit $\text{SU}(2)^2/ \Delta \text{SU}(2)$.

The next Proposition shows that the previous structure cannot be smoothly extended to two singular orbits of type $\text{SU}(2)^2/ \Delta \text{SU}(2)$.

\begin{prop}
On the cohomogeneity one manifold $M=S^4 \times S^3$ with group diagram $\text{SU}(2)^2 \supset \Delta \text{SU}(2), \Delta \text{SU}(2) \supset \{ 1 \}$, there are no $\text{SU}(2)^2$-invariant coclosed $G_2$-structures constructed from half-flat $\text{SU}(3)$-structures.
\end{prop}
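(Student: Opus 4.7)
The plan is to argue by contradiction. Suppose such a coclosed $G_2$-structure exists, so there are smooth functions $A_i, B_i: [0,1] \to \R$ satisfying the coclosed system (\ref{eq:coclosedAB}) on $(0,1)$ with $A_i, B_i$ nonzero on the interior. Since $M = S^4 \times S^3$ has two singular orbits, both of type $\text{SU}(2)^2/\Delta\text{SU}(2)$, the smoothness conditions of Lemma \ref{lemma8} must hold at \emph{both} $t=0$ and $t=1$ (the latter after the reparametrisation $t \mapsto 1-t$). In particular, $A_i(0) = A_i(1) = 0$ for every $i$, while $A_i > 0$ on $(0,1)$, and the $B_i$ are continuous with nonzero common value at each endpoint.

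Next, I would introduce the auxiliary functions $D_i \coloneqq A_j B_j A_k B_k$ used in Section \ref{sec:coclosedeqns}, and set $S \coloneqq D_1+D_2+D_3$. The boundary conditions force $D_i(0) = D_i(1) = 0$, and hence $S(0) = S(1) = 0$. On the open interval $(0,1)$ each $D_i = A_j A_k B_j B_k$ is a product of continuous, nowhere vanishing functions; it is positive near $t = 0^+$ (arguing as in the proof of Lemma \ref{lemma:signDs}), so by continuity $D_i(t) > 0$ throughout $(0,1)$. Summing the three equations of (\ref{eq:coclosedD}) yields
$$
\dot S \;=\; 3 A_1 A_2 A_3 \;+\; \frac{A_1^2 D_1 + A_2^2 D_2 + A_3^2 D_3}{A_1 A_2 A_3},
$$
which is strictly positive on $(0,1)$. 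Therefore $S$ is strictly increasing on $[0,1]$, contradicting $S(0) = S(1) = 0$.

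The main conceptual step is spotting this monotone invariant $S$. Row by row the cross terms in (\ref{eq:coclosedD}) are sign indefinite, but when the three equations are summed, the off-diagonal entries of the matrix $M$ in (\ref{eq:MN}) combine with the diagonal ones so that only manifestly nonnegative contributions survive on the right-hand side. The remaining tasks, namely transferring Lemma \ref{lemma8} to the second singular orbit via $t \mapsto 1-t$ and confirming the positivity of the $D_i$ on $(0,1)$, are essentially bookkeeping given the symmetric group diagram $\text{SU}(2)^2 \supset \Delta\text{SU}(2), \Delta\text{SU}(2) \supset \{1\}$.
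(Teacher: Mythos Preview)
Your argument is correct and is essentially the paper's own proof: the paper likewise sums the three equations of (\ref{eq:coclosedD}) to obtain $\dot S = 3A_1A_2A_3 + (A_1^2 D_1 + A_2^2 D_2 + A_3^2 D_3)/(A_1A_2A_3) > 0$ on $(0,1)$, and then observes that $S$ vanishes at $t=0$ and hence cannot vanish at $t=1$. The paper additionally records a second, independent argument based purely on the boundary conditions for the $A_i$, but your monotonicity argument already matches the main proof.
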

\begin{proof}
Suppose that $M$ has a $G_2$-structure constructed from a half-flat $\text{SU}(3)$-structure on its principal part, which is invariant under the cohomogeneity one action. 
Then the $\text{SU}(3)$-structure $(\omega, \Omega_1, \Omega_2)$ can be written as in (\ref{eq:SU(3)structure}), and there are
$A_1, A_2, A_3:[0, 1] \rightarrow \R$ smooth functions with $A_i(t) >0$ for $t \in (0,1)$ such that
\begin{enumerate}[label=(\roman*)]
    \item $A_i$'s are odd around $t=0$;
    \item $\dot{A}_i(0)=1/2$.
\end{enumerate}
Let $b_0 \neq 0$.
By Proposition \ref{prop:exuniqsols} we know that there exists a unique solution of the system (\ref{eq:coclosedD}), $D_i: [0,1) \rightarrow \R$ with initial conditions $D_i(0)=\dot{D}_i(0)=0, \ddot{D}_i(0)=b_0^2/4$, for $i=1,2,3$. 
Then, there exist unique functions $B_1, B_2, B_3: [0,1] \rightarrow \R$ that, together with $A_1, A_2, A_3$, will solve (\ref{eq:coclosedAB}).
Also, by Proposition \ref{prop:coclosednoncompact} the metric (\ref{eq:g}) can be extended to the singular orbit at $t=0$. Hence, this solution can be extended to the interval $[0,1)$.
It remains to check whether we can extend the solutions to $t=1$. 
In equation (\ref{eq:coclosedD}), we add 
$$
\dfrac{d}{dt} (D_1 + D_2 + D_3) = 3 A_1 A_2 A_3 + \dfrac{A_1^2 D_1 + A_2^2 D_2 + A_3^2 D_3}{A_1 A_2 A_3},
$$
In Lemma \ref{lemma:signDs} we proved that for $t \in (0,1)$, $D_i(t) >0$, $i=1,2,3$ so $d (D_1 + D_2 + D_3)/dt >0$. In particular, as $D_1+D_2+D_3$ vanishes at $t=0$, it cannot be 0 at $t=1$. 
Therefore, there cannot be an extension to the singular orbit at $t=1$.
This is because the $B_i$ functions obtained from the $D_i$'s blow up at this orbit.\\
Alternatively, this result also follows from the smoothness conditions for $A_i$, $i=1,2,3$ around both singular orbits. From the symmetry of equations (\ref{eq:coclosedAB}), we deduce that the smoothness conditions at $t=1$ require that $A_i(1)=0$, $\dot{A}_i(1)=1/2$. This, together with $A_i(0)=0$, $\dot{A}_i(0)=1/2$, implies that there is a point in $(0,1)$ where the $A_i$ vanishes. This gives a contradiction with the fact that $A_i$ is sign definite in $(0,1)$. 
\end{proof}

\subsection{Conclusions and future directions}\label{sec:conclusions}

The first thing that we observe is that on $M= \R^4 \times S^3$ there are many more $G_2$-structures constructed from half-flat $\text{SU}(3)$-structures that are coclosed than that are torsion-free (in general, one can perturb a coclosed $G_2$-structure by adding a small exact 4-form to construct another coclosed $G_2$-structure). 
In particular, Lotay and Oliveira give a two-parameter family of torsion-free $G_2$-structures constructed from half-flat $\text{SU}(3)$-structures \cite[Remark 21]{Lotay17}. The coclosed family described in this chapter depends on three smooth functions satisfying certain boundary conditions and a non-zero parameter.
More generally, known examples of torsion-free $G_2$-structures constructed from half-flat $\text{SU}(3)$-structures have an extra $\text{U}(1)$-symmetry.
With our previous notation, this means that $A_2=A_3$ and $B_2 =B_3$. We showed that if we relax the torsion-free condition to coclosed, the $A_i$'s only need to be equal at orders lower or equal than 1 in the Taylor expansion of the cohomogeneity one parameter around the singular orbit, and the $B_i$'s at orders lower or equal than 2. 
Our families of structures contain the Bryant--Salamon $G_2$-holonomy metric \cite{BS89} and the 1-parameter family of complete $\left( \text{SU}(2)^2 \times \text{U}(1) \right)$-invariant $G_2$-metrics of Brandhuber et al.\ \cite{Brandhuber01} and Bogoyavlenskaya \cite{Bogoyavlenskaya13}, also known as the $\mathbb{B}_7$ family. 
In particular, we deduce from \cite[Theorem 6.16]{FHN21} that if we have an extra $\text{U}(1)$-symmetry and impose that the metric is $G_2$ and complete, we get precisely this family.

While the objective of this paper is the construction of new examples of coclosed $G_2$-structures which are invariant under the cohomogeneity one action of $\text{SU}(2)^2$, we outline future directions for a systematic study of these structures which could be made by relaxing some of the assumptions made in this paper. We first recall that coclosed $G_2$-structures do not need to be half-flat. While this condition is guaranteed to hold for torsion-free $G_2$-structures, when our structures are coclosed we only need a weaker condition: $d\omega^2=0$.
Therefore, we may consider families of $\text{SU}(3)$-structures $(\omega(t), \Omega_2(t))$ such that $d\omega^2=0$ instead of being half-flat, but this would require a different analysis as the description of the structure from equation (\ref{eq:SU(3)structure}) would no longer hold. 
It would also be interesting to consider seven-dimensional manifolds with a $G_2$-structure invariant under a cohomogeneity one action of $\text{SU}(2)^2$ which is different than the one considered in this paper and described in Section \ref{sec:extension} (see Remark \ref{rmk:inequivactions} for other inequivalent actions).
As the coclosed equations (\ref{eq:coclosedD}) do not depend on the action, these will remain valid for a different action of $\text{SU}(2)^2$, while the conditions for a smooth extension to the singular orbit will be different.
A similar analysis of the ODEs with the new boundary conditions could be done for these cases, potentially providing new families of metrics containing $\mathbb{D}_7$ family and the infinite extension of the $\mathbb{C}_7$ family as particular cases.

In \cite{AS22}, A. and Salvatore showed that if $M$ is a six-dimensional simply connected cohomogeneity one manifold under the almost effective action of a connected Lie group G, then $M$ admits no $G$-invariant balanced non-Kähler $\text{SU}(3)$-structures.
As in this paper, the search for balanced $\text{SU}(3)$-structures was also motivated by heterotic string theory; in particular, the Hull--Strominger system.
This means that on most cases (and possibly always), the existence of a balanced structure (which can be seen as the six-dimensional analogue to a coclosed $G_2$-structure) in the cohomogeneity one setting forces the structure to be K\"ahler. 
We observe that the existence of a class of coclosed $G_2$-structures, not necessarily torsion-free, in the cohomogeneity one setting contrasts with this result, as in the 7 dimensional analogue, the cohomogeneity one hypothesis does not force coclosed $G_2$-structures to be torsion-free.

Given the structures found in this paper, the question that arises is what are the $G_2$-instantons over $\R^4 \times S^3$ with these structures. Another question is whether it is possible to find solutions to the heterotic $G_2$ system over them. At the present time, the author is working on these problems (see Remark \ref{rmk:flux}).

\begin{footnotesize}
\renewcommand{\refname}{References}
\bibliographystyle{alpha}
\bibliography{biblio}
\end{footnotesize}

\end{document}